\documentclass[10pt,oneside,leqno]{amsart}

\usepackage[cm]{fullpage}

\usepackage{amsmath,amsfonts,amssymb}
\usepackage{graphicx}
\usepackage[all]{xy}
\usepackage[english]{babel}
\usepackage[utf8x]{inputenc}
\usepackage{lscape}
\usepackage{url}
\usepackage{booktabs}
\usepackage{multirow}
\usepackage{array}
\usepackage{ifthen}
\usepackage{color}
\usepackage{paralist}

 \newtheorem{thm}{Theorem}[section]
 \newtheorem{prop}[thm]{Proposition}
 
 \newtheorem{lemma}[thm]{Lemma}
 
\theoremstyle{definition}
 \newtheorem{defi}[thm]{Definition}
 \newtheorem{rem}[thm]{Remark}


\newcommand{\N}{\mathbb{N}}

\newcommand{\R}{\mathbb{R}}
\newcommand{\C}{\mathbb{C}}
\newcommand{\st}{\;:\;}

\newcommand{\norma}[2]{\left\|#1\right\|_{\mathrm{L}^2_{#2}}}
\newcommand{\normazero}[1]{\left\|#1\right\|_{\mathrm{L}^2}}
\newcommand{\scalar}[3]{\left\langle  #1 \,\left|\, #2  \right. \right\rangle_{\mathrm{L}^2_{#3}}}
\newcommand{\scalard}[2]{\left\langle #1 \,\left|\, #2 \right. \right\rangle}

\newcommand{\sspace}{\cdot}
\newcommand{\ssspace}{\cdot\cdot}

\newcommand{\duale}[1]{#1^*}

\newcommand{\kth}[1]{\ifthenelse{\equal{#1}{1}}{$#1^\text{st}$}{\ifthenelse{\equal{#1}{2}}{$#1^\text{nd}$}{\ifthenelse{\equal{#1}{3}}{$#1^\text{rd}$}{$#1^\text{th}$}}}}

\newcommand{\ssum}[1]{\widetilde{\sum_{#1}}}

\newcommand{\Leb}[2]{\ifthenelse{\equal{#2}{0}}{\mathrm{L}^{2}\left(X;\wedge^{#1}T^*X\right)}{\mathrm{L}^{2}_{#2}\left(X;\wedge^{#1}T^*X\right)}}
\newcommand{\LebK}[2]{\ifthenelse{\equal{#2}{0}}{\mathrm{L}^{2}\left(K;\wedge^{#1}T^*X\right)}{\mathrm{L}^{2}_{#2}\left(K;\wedge^{#1}T^*X\right)}}
\newcommand{\Lebloc}[1]{\mathrm{L}^{2}_{\text{loc}}\left(X;\wedge^{#1}T^*X\right)}
\newcommand{\Cinf}[1]{\ifthenelse{\equal{#1}{0}}{\mathcal{C}^\infty\left(X;\R\right)}{\mathcal{C}^\infty\left(X;\wedge^{#1}T^*X\right)}}
\newcommand{\Cinfc}[1]{\ifthenelse{\equal{#1}{0}}{\mathcal{C}^\infty_{\mathrm{c}}\left(X;\R\right)}{\mathcal{C}^\infty_{\mathrm{c}}\left(X;\wedge^{#1}T^*X\right)}}
\newcommand{\Sob}[3]{\ifthenelse{\equal{#2}{0}}{\mathrm{W}^{#3, 2}\left(X;\wedge^{#1}T^*X\right)}{\mathrm{W}^{#3, 2}_{#2}\left(X;\wedge^{#1}T^*X\right)}}
\newcommand{\Sobloc}[2]{\mathrm{W}^{#1, 2}_{\textrm{loc}}\left(X;\wedge^{#2}T^*X\right)}
\newcommand{\SobK}[3]{\ifthenelse{\equal{#2}{0}}{\mathrm{W}^{#3, 2}\left(K;\wedge^{#1}T^*X\right)}{\mathrm{W}^{#3, 2}_{#2}\left(K;\wedge^{#1}T^*X\right)}}

\newcommand{\sign}[2]{
 \mathrm{sign}
 \left(
 \begin{array}{c}
   #1 \\
   #2
 \end{array}
 \right)
\,}

\newcommand{\der}[2]{\frac{\del #1}{\del x^{#2}}\,}

\newcommand{\expp}[1]{\exp\left(#1\right)\,}

\newcommand{\In}[1]{\left\{1,\ldots,#1\right\}}

\newcommand{\End}[1]{\mathrm{End}\left(#1\right)}

\newcommand{\Hom}[2]{\mathrm{Hom}\left(#1,\,#2\right)}

\newcommand{\sgn}[1]{\mathrm{sgn}\left(#1\right)}

\newcommand{\Sym}[1]{\mathrm{Sym}^{2}\left(#1\right)}

\newcommand{\arxiv}[1]{\texttt{#1}}

\newcommand{\paragrafo}[2]{\smallskip \paragraph{\texttt{Step {#1}} -- {\itshape #2}.\ }}

\newcommand{\destar}[2]{\duale{\de}_{#1,\,#2}}

\newcommand{\qednero}{\hfill$\blacksquare$}


\DeclareMathOperator{\dom}{dom}
\DeclareMathOperator{\imm}{im}
\DeclareMathOperator{\de}{d}

\DeclareMathOperator{\vol}{vol}
\DeclareMathOperator{\Hess}{Hess}
\DeclareMathOperator{\PSH}{PSH}
\DeclareMathOperator{\Levi}{L}
\DeclareMathOperator{\supp}{supp}

\newcommand{\del}{\partial}

\newcommand{\interno}[1]{\mathrm{int}\left({#1}\right)}


\allowdisplaybreaks[1]

\title{A vanishing result for strictly $p$-convex domains}
\author{Daniele Angella}
\address[Daniele Angella]{Dipartimento di Matematica\\
Universit\`{a} di Pisa \\
Largo Bruno Pontecorvo 5, 56127\\ 
Pisa, Italy}
\email{angella@mail.dm.unipi.it}

\author{Simone Calamai}
\address[Simone Calamai]{Scuola Normale Superiore\\
Piazza dei Cavalieri 7, 56126\\
Pisa, Italy}
\email{simone.calamai@sns.it}

\keywords{$p$-convexity in the sense of Harvey and Lawson, de Rham cohomology, vanishing theorems}
\thanks{This work was supported by GNSAGA of INdAM}
\subjclass[2010]{26B25, 32F17, 58A12}




\begin{document}

\vspace{-2cm}
\begin{minipage}[l]{10cm}
{\sffamily
  D. Angella, S. Calamai, A vanishing result for strictly $p$-convex domains,
  \textsc{doi:} \texttt{10.1007/s10231-012-0315-5}, to appear in {\em Ann. Mat. Pura Appl.}.

\smallskip

  \begin{flushright}\begin{footnotesize}
  (The original publication is available at \url{www.springerlink.com}.)
  \end{footnotesize}\end{flushright}
}
\end{minipage}
\vspace{2cm}

\begin{abstract}
In view of A. Andreotti and H. Grauert’s vanishing theorem for $q$-complete domains in $\C^n$, \cite{andreotti-grauert}, we re-prove a vanishing result by J.-P. Sha, \cite{sha}, and H. Wu, \cite{wu}, for the de Rham cohomology of strictly $p$-convex domains in $\R^n$ in the sense of F.~R. Harvey and H.~B. Lawson, \cite{harvey-lawson-2}.
Our proof uses the $\mathrm{L}^2$-techniques developed by L. H\"ormander, \cite{hormander}, and A. Andreotti and E. Vesentini, \cite{andreotti-vesentini}.
\end{abstract}

\maketitle

\section*{Introduction}

A weaker condition than holomorphic convexity for domains in $\C^n$ has been introduced by A. Andreotti and H. Grauert in \cite{andreotti-grauert}, defining \emph{$q$-complete domains} as domains in $\C^n$ admitting a proper exhaustion function whose Levi form has $n-p+1$ positive eigen-values.

In a recent series of foundational papers, \cite{harvey-lawson-1, harvey-lawson-2}, and references therein, F.~R. Harvey and H.~B. Lawson raise the interest on generalizations of the concept of convexity for Riemannian manifolds, proving many important results for \emph{$p$-convex} manifolds: namely, starting with a Riemannian manifold $(X,\, g)$, they ask whether it admits an exhaustion function whose Hessian is positive definite or satisfies weaker positive conditions.\\
Interpolating between the classical notions of convex functions and pluri-sub-harmonic functions, in \cite{harvey-lawson-2} they define the class of \emph{$p$-pluri-sub-harmonic functions} in terms of the positivity of the minors of their Hessian form, and they study \emph{$p$-convex domains}, which can be regarded as domains in $\R^n$ endowed with a smooth $p$-pluri-sub-harmonic proper exhaustion function.\\
The notions of \emph{geometric pluri-sub-harmonicity} and \emph{geometric convexity}, introduced and studied by  F.~R. Harvey and H.~B. Lawson in \cite{harvey-lawson-1}, is closely related to holomorphic convexity and $q$-completeness in the sense of A. Andreotti and H. Grauert, \cite{andreotti-grauert}.

In the complex case, holomorphic convexity and, more in general, $q$-completeness provide vanishing theorems for the Dolbeault cohomology (\cite{hormander}, respectively \cite{andreotti-grauert, andreotti-vesentini}).

We are concerned in studying vanishing results for strictly $p$-convex domains in $\R^n$ in the sense of F.~R. Harvey and H.~B. Lawson. More precisely, we give a proof of the following result.

\smallskip
\noindent {\bfseries Theorem \ref{thm:vanishing}.\ }
{\itshape
 Let $X$ be a strictly $p$-convex domain in $\R^n$. Then $H^k_{dR}(X;\R)=\{0\}$ for every $k\geq p$.
}
\smallskip

As pointed out to us by F.~R. Harvey and H.~B. Lawson, the above result was already known, as a consequence of \cite[Theorem 1]{sha} by J.-P. Sha,
and \cite[Theorem 1]{wu} by H. Wu, see also \cite[Proposition 5.7]{harvey-lawson-2}:
more precisely, they prove, using Morse theory, that the existence of a smooth proper strictly $p$-pluri-sub-harmonic exhaustion function
has consequences on the homotopy type of the domain.\\
In spite of this, our proof differs in the techniques, which are inspired by A. Andreotti and E. Vesentini \cite{andreotti-vesentini}:
in particular, the $\mathrm{L}^2$-techniques used in our proof could be hopefully applied in a wider context,
a fact which we would like to investigate further in future work.

\medskip

The organization of the paper is as follows. In Section \ref{sec:harvey-lawson}, we recall the main definitions introduced in \cite{harvey-lawson-1, harvey-lawson-2}, and the results proven by A. Andreotti and H. Grauert in \cite{andreotti-grauert}. In Section \ref{sec:preliminaries}, we prove some useful estimates, which will be used in Section \ref{sec:main-thm} to prove Theorem \ref{thm:vanishing}.

\medskip

\noindent{\sl Acknowledgments.}
The authors are warmly grateful to Adriano Tomassini and Xiuxiong Chen for their constant support and encouragement,
and to Giuseppe Tomassini for clear teaching on the H\"ormander $\mathrm{L}^2$-techniques.
The authors would like to thank Adriano also for his many useful suggestions.
Many thanks are due also to F.~R. Harvey and H.~B. Lawson, for bringing authors' attention to the results by J.-P. Sha, \cite{sha}, and H. Wu, \cite{wu}, and for further fruitful suggestions and remarks.
They are very grateful to the anonymous referee for his/her valuable comments, which highly improved the presentation of the paper.

\section{The notion of $p$-convexity by Harvey and Lawson}\label{sec:harvey-lawson}

Following F.~R. Harvey and H.~B. Lawson, \cite{harvey-lawson-2, harvey-lawson-1}, firstly we recall point-wise definitions of $p$-positive symmetric endomorphisms, then we will turn to manifolds, and finally we will recall the notion of $p$-pluri-sub-harmonic (exhaustion) functions and (strictly) $p$-convex domains.

\subsection{$p$-positive (sections of) symmetric endomorphisms}

Let $\left(V,\, \scalard{\sspace}{\ssspace}\right)$ be an $n$-dimensional real inner product space. Let $G:V\to \duale{V}$ denote the isomorphism defined as $G(v):= \scalard{v}{\sspace}$.\\
Let $\Sym{V}$ denote the space of symmetric elements of $\duale{\left(V \otimes V\right)}$; namely, $A\in \Sym{V}$ if and only if $A(v \otimes w) = A(w \otimes v)$, for any $v,w\in V$. By means of the inner product $\scalard{\sspace}{\ssspace}$, the space $\Sym{V}$ is isomorphic to the space of the $\scalard{\sspace}{\ssspace}$-symmetric endomorphisms of $V$: given $A \in \Sym{V}$, we denote by $G^{-1}A \in \Hom{V}{V}$ the corresponding $\scalard{\sspace}{\ssspace}$-symmetric endomorphism.

The endomorphism $G^{-1}A\in\Hom{V}{V}$ extends to $D_{G^{-1}A}^{[p]}\in\Hom{\wedge^p V}{\wedge^p V}$; namely, on a simple vector $v_{i_1} \wedge \cdots \wedge v_{i_p}\in\wedge^pV$, the endomorphism $D_{G^{-1}A}^{[p]}$ acts as
\begin{align*}
 D_{G^{-1}A}^{[p]} \left( v_{i_1} \wedge \cdots \wedge v_{i_p} \right) \;:=\; 
 \sum_{\ell=1}^{p} v_{i_1}\wedge\cdots\wedge v_{i_{\ell-1}}\wedge G^{-1}A \left(v_{i_\ell}\right) \wedge v_{i_{\ell+1}}\wedge\cdots\wedge v_{i_p} \;.
\end{align*}
Observe that $D_{G^{-1}A}^{[p]}\in\Hom{\wedge^p V}{\wedge^p V}$ is a symmetric endomorphism with respect to the scalar product on $\wedge^pV$ induced by $\scalard{\sspace}{\ssspace}$.

Finally, given a $\scalard{\sspace}{\ssspace}$-symmetric endomorphism  $E\in \Hom{V}{V}$, let $\sgn{E}$ denote the number of non-negative eigenvalues of $E$. 

Notice that, given  $A\in \Sym{V}$, and given two inner products  on $V$ inducing respectively the isomorphisms $G_1$ and $G_2$, then there holds $\sgn{G_1^{-1}A} = \sgn{G_2^{-1}A}$; it is also important to notice that, for $p>1$, it might hold $\sgn{D_{G_1^{-1}A}^{[p]}} \neq  \sgn{D_{G_2^{-1}A}^{[p]}}$, since the eigenvalues of $D_{G^{-1}A}^{[p]}$ are of the form
\[
 \lambda_{i_1} + \cdots + \lambda_{i_p} \qquad \text{ for } i_1, \ldots, i_p \in \In{n}  \text{ s.t. } i_1 < \cdots < i_p \;, 
\]
where $\lambda_1, \, \ldots , \, \lambda_n$ are the eigenvalues of $G^{-1}A$.

\begin{defi}[{\cite{harvey-lawson-2, harvey-lawson-1}}]\label{defi:p-positivity-on-vectors}
\begin{itemize}
 \item Let $V$ be a $\R$-vector space endowed with an inner product $\scalard{\sspace}{\ssspace}$.
 Denote the space of \emph{$p$-positive forms of \kth{k} branch} on $V$ as
\begin{align*}
 \mathcal{P}_p^{(k)}\left(V,\, \scalard{\sspace}{\ssspace}\right):= 
\left\{ 
A \in \Sym{V} \st \sgn{ D_{G^{-1}A}^{[p]}} \geq \binom{n}{p} -k +1  
\right\}\;.
\end{align*}

 \item Let $\left(X ,\, g \right)$ be a Riemannian manifold.
Define the space of \emph{$p$-positive sections of \kth{k} branch} of the bundle $\Sym{TX}$ of symmetric endomorphisms of $TX$ as
\begin{align*}
 \mathcal{P}_p^{(k)}\left(X,\, g \right) \;:=\;
\left\{
 A \in \Sym{T X} \st \forall x\in X, \; A_x\in\mathcal{P}_p^{(k)}\left(T_xX, g_x\right) 
\right\}.
\end{align*}

\qednero
\end{itemize}
\end{defi}


\subsection{$p$-pluri-sub-harmonic functions}
In order to introduce an exhaustion of a given Riemannian manifold, we focus on special 
$p$-positive symmetric $2$-forms, those arising from the Hessian of smooth functions.

\medskip

Thus, let $\left(X,\, g\right)$ be a Riemannian manifold and let $u$ be a smooth real valued function on $X$.
Let $\nabla$ denote the Levi-Civita connection of the Riemannian metric $g$, and let
\begin{align*}
 \Hess u \left(V, W\right) \;:=\; V\, W\, u - \left(\nabla_V W\right)\, u \;,  
\end{align*}
where $V$ and $W$ are smooth sections of the tangent bundle $TX$.
Thus, $\Hess u(x)  \in \Sym{T_x X}$, for any $x \in X$.

\begin{defi}[{\cite{harvey-lawson-1}}]
Let $\left(X,\, g\right)$ be a Riemannian manifold.
\begin{itemize}
 \item The space
\begin{align*}
\PSH^{(k)}_p\left(X,\, g\right) \;:=\;
\left\{ u\in \Cinf{0} \st \Hess u \in \mathcal{P}^{(k)}_p \left( X,\, g\right) \right\} \;,
\end{align*}
is called the space of \emph{$p$-pluri-sub-harmonic functions of \kth{k} branch} on $X$.

\item The space
\begin{align*}
\interno{\PSH^{(k)}_p\left(X,\, g\right)} \;:=\;
\left\{ u\in \Cinf{0} \st \Hess u \in \interno{\mathcal{P}^{(k)}_p \left( X,\, g\right)} \right\} \;,
\end{align*}
(where $\interno{\mathcal{P}^{(k)}_p \left( X,\, g\right)}$ denotes the interior of $\mathcal{P}^{(k)}_p \left( X,\, g\right)$) is called the space of \emph{strictly $p$-pluri-sub-harmonic functions of \kth{k} branch} on $X$.
\qednero
\end{itemize}
\end{defi}

\subsection{(Strictly) $p$-convexity}

We are now ready to recall the concept of $p$-convexity, which is central in \cite{harvey-lawson-2}.

\medskip

Let $\left(X,\, g\right)$ be a Riemannian manifold.
Let $K\subseteq X$ be a compact set. The \emph{$p$-convex hull} of $K$ is given by
\begin{align*}
{\widetilde{K}}^{\PSH^{(1)}_p \left( X ,\, g\right)} \;:=\;
\left\{ x\in X \st
\forall \phi \in PSH^{(1)}_p \left( X ,\, g\right) ,\;
\phi(x) \leq \max_{y\in K}   \phi (y)
\right\} \;.
\end{align*}

\begin{defi}[{\cite{harvey-lawson-1}}]\label{defi:p-convexity}
Let $\left(X,\, g\right)$ be a Riemannian manifold.
Then, $X$ is called \emph{$p$-convex} if, for any compact set $K\subseteq X$, then ${\widetilde{K}}^{\PSH^{(1)}_p \left( X ,\, g\right)}$ is relatively compact in $X$.
\qednero
\end{defi}

\medskip

Define the \emph{$p$-core} of $X$, \cite[Definition 4.1]{harvey-lawson-1}, as
$$ \mathrm{Core}_p \left( X ,\, g \right) \;:=\; \left\{ x\in X \st \text{for all }u\in\PSH^{(1)}_p\left(X,\, g\right),\; \Hess u(x)\not\in\interno{\mathcal{P}^{(1)}_p\left(T_xX,\, g_x\right)} \right\} \;. $$

\begin{defi}[{\cite{harvey-lawson-1}}]
 Let $\left(X,\, g\right)$ be a Riemannian manifold. Then, $X$ is called \emph{strictly $p$-convex} if
\begin{inparaenum}[(\itshape i\upshape)]
 \item $\mathrm{Core}_p \left( X ,\, g \right) = \varnothing$ and,
 \item for any compact set $K\subseteq X$, then ${\widetilde{K}}^{\PSH^{(1)}_p\left( X ,\, g\right)}$ is relatively compact in $X$.
\end{inparaenum}
\qednero
\end{defi}

\subsection{(Strictly) $p$-convexity and (strictly) $p$-pluri-sub-harmonic exhaustion functions}
The following correspondences come from \cite{harvey-lawson-1}.
\begin{thm}[{\cite[Theorem 4.4, Theorem 4.8]{harvey-lawson-1}}]\label{thm:p-positive-exhaustion-functions}
Let $\left(X,\, g\right)$ be a Riemannian manifold.
Then $X$ is $p$-convex (respectively, strictly $p$-convex) if and only if $X$ admits a smooth proper exhaustion function $u\in \PSH^{(1)}_p \left( X ,\, g\right)$ (respectively, $u\in \interno{\PSH^{(1)}_p \left( X ,\, g\right)}$).
\end{thm}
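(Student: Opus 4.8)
The proof relies on the convexity of the cone $\mathcal{P}^{(1)}_p$ and on its consequent stability properties. Since for $k=1$ the condition $\sgn{D_{G^{-1}A}^{[p]}}\geq\binom{n}{p}$ forces every eigenvalue $\lambda_{i_1}+\cdots+\lambda_{i_p}$ of $D_{G^{-1}A}^{[p]}$ to be non-negative, a symmetric form $A$ lies in $\mathcal{P}^{(1)}_p\left(V,\scalard{\sspace}{\ssspace}\right)$ if and only if the partial trace $\mathrm{tr}\,(A|_W)\geq 0$ for every $p$-dimensional subspace $W\subseteq V$; thus $\mathcal{P}^{(1)}_p$ is a closed convex cone, and $\interno{\mathcal{P}^{(1)}_p}$ is cut out by the strict inequalities. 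From this I would record the following facts, all formal consequences of convexity together with the viscosity interpretation of the Hessian condition: $\PSH^{(1)}_p\left(X,g\right)$ is closed under addition and under multiplication by positive constants; it is stable under finite maxima, under locally uniform limits, and under decreasing limits; and — the substantial input — every upper semicontinuous $p$-pluri-sub-harmonic function can be approximated from above, uniformly on compacta, by smooth ones, the approximation landing in $\interno{\PSH^{(1)}_p}$ wherever the original is strictly $p$-pluri-sub-harmonic (a Richberg-type regularization for the convex subequation $\mathcal{P}^{(1)}_p$).

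The easy implication is $(\Leftarrow)$. Suppose $u\in\PSH^{(1)}_p\left(X,g\right)$ is a smooth proper exhaustion function. For a compact $K\subseteq X$ set $c:=\max_{K}u$. If $x$ belongs to the $p$-convex hull $\widetilde{K}^{\PSH^{(1)}_p\left(X,g\right)}$, then applying the defining inequality to $u$ itself gives $u(x)\leq c$; hence the hull is contained in the sublevel set $\{u\leq c\}$, which is compact since $u$ is a proper exhaustion. As the hull is an intersection of closed sets $\{\phi\leq\max_K\phi\}$, it is closed, hence compact: $X$ is $p$-convex. If moreover $u\in\interno{\PSH^{(1)}_p\left(X,g\right)}$, then $\Hess u(x)\in\interno{\mathcal{P}^{(1)}_p\left(T_xX,g_x\right)}$ for every $x$, so no point lies in $\mathrm{Core}_p\left(X,g\right)$; thus $X$ is strictly $p$-convex.

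For the converse $(\Rightarrow)$ I would build the exhaustion by the classical Grauert--Hörmander scheme adapted to the cone $\mathcal{P}^{(1)}_p$. Fix an exhaustion of $X$ by compacta and replace each member by its $p$-convex hull, compact by hypothesis; after enlarging, obtain $\PSH^{(1)}_p$-convex compacta $K_1\subseteq K_2\subseteq\cdots$ with $K_j\subseteq\interno{K_{j+1}}$ and $\bigcup_j K_j=X$. For $x\notin K_j=\widetilde{K_j}^{\PSH^{(1)}_p\left(X,g\right)}$ the definition of the hull yields $\phi\in\PSH^{(1)}_p\left(X,g\right)$ with $\max_{K_j}\phi<\phi(x)$; after normalising, $\phi\leq 0$ on $K_j$ and $\phi(x)>0$, and a large multiple makes $\phi$ as big as desired near $x$. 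Covering the compact shell $K_{j+1}\setminus\interno{K_j}$ by finitely many such neighbourhoods and taking the maximum produces $w_j\in\PSH^{(1)}_p\left(X,g\right)$ with $w_j\leq 0$ on $K_j$ and $w_j\geq j$ on the shell. Then, with suitable weights $c_j>0$, the function
\begin{align*}
 u\;:=\;\sum_{j}c_j\,\max\{w_j,\,0\}
\end{align*}
is $p$-pluri-sub-harmonic (by stability under positive sums and maxima), is finite and continuous, and tends to $+\infty$ as one leaves every $K_j$, hence is a proper exhaustion. Finally the Richberg-type regularization replaces $u$ by a smooth $\PSH^{(1)}_p$ proper exhaustion function, settling the $p$-convex case.

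In the strictly $p$-convex case the extra hypothesis $\mathrm{Core}_p\left(X,g\right)=\varnothing$ means that through every point there passes a function of $\PSH^{(1)}_p\left(X,g\right)$ whose Hessian is strictly $p$-positive there; incorporating such functions into the finite maxima above (or adding a small locally strictly $p$-pluri-sub-harmonic correction before regularizing) upgrades the construction so that the smoothed exhaustion lies in $\interno{\PSH^{(1)}_p\left(X,g\right)}$ everywhere. The main obstacle is precisely this analytic smoothing step: the maxima, sums, and limits above only produce a continuous (a priori merely upper semicontinuous) $p$-pluri-sub-harmonic function, and passing to a genuinely smooth one while preserving — and in the strict case improving to strict — $p$-positivity of the Hessian is the delicate point, resting on the fact that $\mathcal{P}^{(1)}_p$ is a convex subequation for which a Richberg approximation theorem is available.
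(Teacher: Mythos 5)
First, a remark on the comparison itself: the paper does not prove Theorem \ref{thm:p-positive-exhaustion-functions} at all --- it is imported verbatim from F.~R. Harvey and H.~B. Lawson \cite[Theorems 4.4 and 4.8]{harvey-lawson-1} --- so your proposal can only be measured against their argument, not against anything internal to this paper. Within your proposal, the implication $(\Leftarrow)$ is correct and complete: the hull of a compact $K$ is closed and contained in the compact sublevel set $\left\{u\leq\max_K u\right\}$, and in the strict case the function $u$ itself certifies $\mathrm{Core}_p\left(X,\,g\right)=\varnothing$. Your description of $\mathcal{P}^{(1)}_p\left(V,\,\scalard{\sspace}{\ssspace}\right)$ as $\left\{A\in\Sym{V} \st \mathrm{tr}\left(\left.A\right|_W\right)\geq 0 \text{ for every } p\text{-plane } W\subseteq V\right\}$, hence as a closed convex cone, is also correct.

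The genuine gap is in $(\Rightarrow)$, exactly at the step you yourself call ``the delicate point'' and then discharge by citation. The approximation statement you rely on --- every upper semicontinuous $p$-pluri-sub-harmonic function can be approximated, uniformly on compacta, by smooth $p$-pluri-sub-harmonic ones --- is false on a general Riemannian manifold: Richberg-type theorems (Richberg, Greene--Wu, and Harvey--Lawson's smoothing theorem for subequations) all require the function being approximated to be \emph{strictly} $p$-pluri-sub-harmonic, and this hypothesis cannot be dropped. This breaks your argument precisely in the non-strict half of the theorem. Concretely, take the flat cylinder $S^1\times\R$ with $p=1$: it is $1$-convex (the function $t^2$ is a smooth proper convex exhaustion), but every $1$-pluri-sub-harmonic function on it, smooth or viscosity, restricts to a periodic convex function on each closed geodesic $S^1\times\left\{t\right\}$ and is therefore constant along it; hence no function on this manifold is strictly $1$-pluri-sub-harmonic at any point. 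Your Grauert-type construction there produces a continuous convex exhaustion which is nowhere strict, so no strictness-based smoothing theorem can even be invoked, and your proof cannot conclude --- even though the conclusion is true. (Note also that with the paper's definitions $\PSH^{(1)}_p\left(X,\,g\right)\subset\Cinf{0}$ consists of \emph{smooth} functions, so the moment you take genuine maxima you leave the class, and everything hinges on this unproved re-entry.)

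The standard repair is never to leave the smooth category. Since $\mathcal{P}^{(1)}_p$ is a convex cone containing all positive semi-definite forms, a regularized maximum $M\left(u,v\right)$ (with $M$ smooth, convex, non-decreasing in each variable) of smooth $p$-pluri-sub-harmonic functions is again smooth and $p$-pluri-sub-harmonic: indeed $\Hess\left(M\left(u,v\right)\right)=M_a\,\Hess u+M_b\,\Hess v+Q$, where $Q$ is built from the second derivatives of $M$ contracted with $\de u$ and $\de v$ and is positive semi-definite because $M$ is convex; a non-negative combination of cone elements plus a positive semi-definite form stays in the cone. Replacing every $\max$ in your scheme by such a regularized maximum (normalizing $w_j\leq -1$ on $K_j$ so that the series is still locally finite) yields a smooth $p$-pluri-sub-harmonic proper exhaustion directly, with no approximation theorem needed; this settles the non-strict case. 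In the strict case, $\mathrm{Core}_p\left(X,\,g\right)=\varnothing$ provides, for each point, a globally defined smooth $p$-pluri-sub-harmonic function which is strictly so near that point; truncating these from below by regularized maxima with constants and adding them to the exhaustion with weights small enough to preserve properness and $\mathcal{C}^\infty_{\mathrm{loc}}$-convergence upgrades it to a strictly $p$-pluri-sub-harmonic exhaustion, using that $\interno{\mathcal{P}^{(1)}_p}+\mathcal{P}^{(1)}_p\subseteq\interno{\mathcal{P}^{(1)}_p}$ for a convex cone. Only in this strict case would your appeal to a Richberg-type theorem have been legitimate.
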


\subsection{The $p$-convexity and the $q$-completeness}

All along the definitions of the previous section, the special case that we had in mind is the following classical construction in Complex Analysis.

\medskip

In \cite{andreotti-grauert}, A. Andreotti and H. Grauert pointed out the following concept.
\begin{defi}[{\cite{andreotti-grauert}}]\label{defi:p_pseudoconvexity_original}
Let $D \subseteq \mathbb{C}^n$ be a domain, and let $\phi$ be a smooth real-valued function on $D$. The function $\phi$ is called \emph{$p$-pluri-sub-harmonic} (respectively, \emph{strictly $p$-pluri-sub-harmonic}) if and only if, for any $z\in D$, the Hermitian form defined, for $\xi:=:\left(\xi^a\right)_{a\in\In{n}} \in \mathbb{C}^n$, as
\begin{align*}
 \Levi(\phi)_z\,(\xi) \;:=\; \sum_{a , b =1}^n 
\frac{\partial^2 \phi }{\partial z^a \partial \bar z^{b}}(z)\, \xi^a\, \overline{\xi^{b}} \;,
\end{align*}
has $n-p+1$ non-negative (respectively, positive) eigenvalues.
\qednero
\end{defi}


A. Andreotti and H. Grauert, in \cite{andreotti-grauert}, studied domains of $\mathbb{C}^n$ admitting strictly $q$-pluri-sub-harmonic exhaustion functions (the so-called \emph{$q$-complete domains}), proving a vanishing theorem for the higher-degree Dolbeault cohomology groups of such domains; then A. Andreotti and E. Vesentini, in \cite{andreotti-vesentini}, re-prove the same result extending the $\mathrm{L}^2$-techniques by L. H\"ormander, \cite{hormander}.

\bigskip

Thus, in the same vein as A. Andreotti and H. Grauert, we would consider domains $X$ in $\R^n$ endowed with an exhaustion function $u\in\mathcal{C}^\infty(X;\R)$ whose Hessian is in $\interno{\mathcal{P}_p^{(1)}(X,g)}$, proving a vanishing result for the higher-degree de Rham cohomology groups for strictly $p$-convex domains in the sense of F.~R. Harvey and H.~B. Lawson.

\section{Vanishing of the de Rham cohomology for strictly $p$-convex domains}\label{sec:preliminaries}

Let $X$ be an oriented Riemannian manifold of dimension $n$, and denote by $g$ its Riemannian metric and by $\vol$ its volume. The Riemannian metric $g$ induces, for every $x\in X$, a point-wise scalar product $\left\langle \sspace \left| \ssspace \right. \right\rangle_{g_x}\colon \wedge^\bullet T^*_xX \times \wedge^\bullet T^*_xX \to \R$.

Fix $\phi\in\mathcal{C}^0(X;\R)$ a continuous function. For every $\varphi,\, \psi \in \Cinfc{\bullet}$, let
$$ \scalar{\varphi}{\psi}{\phi} \;:=\; \int_X \left\langle \varphi \left| \psi \right. \right\rangle_{g_x} \, \expp{-\phi} \vol \;\in\; \R \;, $$
and, for $k\in\N$, define $\Leb{k}{\phi}$ as the completion of the space $\Cinfc{k}$ of smooth $k$-forms with compact support, with respect to the metric induced by $\norma{\sspace}{\phi}:=\scalar{\sspace}{\sspace}{\phi}$.
Therefore, the space $\Leb{k}{\phi}$ is a Hilbert space, endowed with the scalar product $\scalar{\sspace}{\ssspace}{\phi}$, and $\Cinfc{k}$ is dense in $\Leb{k}{\phi}$. For any $k\in\N$, let $\Lebloc{k}$ denote the space of $k$-forms $f$ whose restriction $f\lfloor_{K}$ to every compact set $K\subseteq X$ belongs to $\LebK{k}{}$.

\medskip

For every $\phi_1,\,\phi_2\in\mathcal{C}^0(X;\R)$, the operator
$$ \de\colon \Leb{\bullet}{\phi_1} \dashrightarrow \Leb{\bullet+1}{\phi_2} $$
is densely-defined and closed; denote by
$$ \destar{\phi_2}{\phi_1}\colon \Leb{\bullet+1}{\phi_2} \dashrightarrow \Leb{\bullet}{\phi_1} $$
its adjoint, which is a densely-defined closed operator.

\medskip

Recall that, on a domain $X$ in $\R^n$, fixed $k\in\N$, $s\in\N$, and $\phi\in\Cinf{0}$, the Sobolev space $\Sob{k}{\phi}{s}$ is the space of $k$-forms $f:=:\ssum{|I|=k}f_I\,\de x^I$ such that $\frac{\del^{\ell_1+\cdots+\ell_n} f_I}{\del^{\ell_1} x^1\cdots \del^{\ell_n}x^n}\in\Leb{k}{\phi}$ for every multi-index $\left(\ell_1,\ldots,\ell_n\right)\in\N^n$ such that $\ell_1+\cdots+\ell_n\leq s$, and for every strictly increasing multi-index $I$ such that $|I|=k$. The space $\Sobloc{s}{k}$ is defined as the space of $k$-forms $f$ whose restriction $f\lfloor_K$ to every compact set $K\subseteq X$ belongs to $\SobK{k}{}{s}$.

\medskip

As a matter of notation, the symbol $\ssum{|I|=k}$ denotes the sum over the strictly increasing multi-indices $I:=:\left(i_1,\ldots,i_k\right)\in\N^k$ (that is, the multi-indices such that $0<i_1<\cdots<i_k$) of length $k$. Given $I_1$ and $I_2$ two multi-indices of length $k$, let $\sign{I_1}{I_2}$ be the sign of the permutation $\left(\begin{array}{c}I_1\\I_2\end{array}\right)$ if $I_1$ is a permutation of $I_2$, and zero otherwise.

\subsection{Some preliminary computations}

Let $X$ be a domain in $\R^n$, that is, an open connected subset of $\R^n$, endowed with the metric and the volume induced, respectively, by the Euclidean metric and the standard volume of $\R^n$.

\medskip

For $\phi_1,\, \phi_2\in\Cinf{0}$, consider $\de\colon \Leb{k-1}{\phi_1} \dashrightarrow \Leb{k}{\phi_2}$.
The following lemma gives an explicit expression of the adjoint $\destar{\phi_2}{\phi_1}\colon \Leb{k}{\phi_2} \dashrightarrow \Leb{k-1}{\phi_1}$ (compare, e.g., with \cite[\S8.2.1]{dellasala-saracco-simioniuc-tomassini}, \cite[Lemma O.2]{gunning-1} in the complex case).

\begin{lemma} \label{lemma:d*}
 Let $X$ be a domain in $\R^n$. Let $\phi_1,\,\phi_2\in\Cinf{0}$ and consider
 $$
 \xymatrix{
 \Leb{k-1}{\phi_1} \ar@{-->}@/^1pc/[r]^{\de} & \Leb{k}{\phi_2} \ar@{-->}@/^1pc/[l]^{\destar{\phi_2}{\phi_1}}
 } \;.
 $$
 Let
 $$v \;:=:\; \ssum{|I|=k} v_I\, \de x^I \in \Leb{k}{\phi_2} $$
 and suppose that $v\in\dom \destar{\phi_2}{\phi_1}$. Then
 \begin{eqnarray*}
 \destar{\phi_2}{\phi_1} v &=& \expp{\phi_1} \destar{0}{0}\left(\expp{-\phi_2} v\right) \\[5pt]
 &=& \ssum{|J|=k-1} \left(-\expp{\phi_1} \ssum{|I|=k}\sum_{\ell=1}^{n} \sign{\ell J}{I}\, \der{\left(v_I\expp{-\phi_2}\right)}{\ell} \right) \, \de x^J \;.
 \end{eqnarray*}
\end{lemma}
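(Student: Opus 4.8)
The plan is to characterise $\destar{\phi_2}{\phi_1}v$ through the defining adjunction identity, to reduce the weighted pairing to an unweighted one, and then to extract the coefficients by a single integration by parts against smooth compactly supported test forms. Recall that, since $v\in\dom\destar{\phi_2}{\phi_1}$, the form $\destar{\phi_2}{\phi_1}v\in\Leb{k-1}{\phi_1}$ is the unique element satisfying $\scalar{\de w}{v}{\phi_2}=\scalar{w}{\destar{\phi_2}{\phi_1}v}{\phi_1}$ for every $w$ in the domain of $\de\colon\Leb{k-1}{\phi_1}\dashrightarrow\Leb{k}{\phi_2}$. Since $\Cinfc{k-1}\subseteq\dom\de$ and $\Cinfc{k-1}$ is dense in $\Leb{k-1}{\phi_1}$, it suffices to evaluate both sides on an arbitrary $w:=:\ssum{|J|=k-1}w_J\,\de x^J\in\Cinfc{k-1}$ and to recognise the resulting linear functional of $w$ as the $\Leb{k-1}{\phi_1}$-pairing against the claimed form.

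First I would absorb the weight on the left-hand side: as the metric is Euclidean and $\expp{-\phi_2}$ is a scalar factor, the pointwise identity $\left\langle\de w\left|v\right.\right\rangle_{g_x}\expp{-\phi_2}=\left\langle\de w\left|\expp{-\phi_2}v\right.\right\rangle_{g_x}$ gives
\begin{align*}
\scalar{\de w}{v}{\phi_2}\;=\;\scalar{\de w}{\expp{-\phi_2}v}{0}\;.
\end{align*}
Writing $u:=\expp{-\phi_2}v$, with components $u_I=v_I\expp{-\phi_2}$, I would expand $\de w=\ssum{|J|=k-1}\sum_{\ell=1}^{n}\der{w_J}{\ell}\,\de x^\ell\wedge\de x^J$ and rewrite each wedge in the increasing basis through $\de x^\ell\wedge\de x^J=\ssum{|I|=k}\sign{\ell J}{I}\,\de x^I$. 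Since $\left\{\de x^I\right\}_{|I|=k}$ is orthonormal, taking pointwise scalar products and integrating yields
\begin{align*}
\scalar{\de w}{u}{0}\;=\;\int_X\ssum{|I|=k}\ssum{|J|=k-1}\sum_{\ell=1}^{n}\sign{\ell J}{I}\,\der{w_J}{\ell}\,u_I\,\vol\;.
\end{align*}

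The decisive step is the integration by parts in the variable $x^\ell$: because each $w_J$ has compact support in $X$, no boundary term survives and the derivative passes onto $u_I$ with a change of sign, giving
\begin{align*}
\scalar{\de w}{u}{0}\;=\;-\int_X\ssum{|J|=k-1}w_J\left(\ssum{|I|=k}\sum_{\ell=1}^{n}\sign{\ell J}{I}\,\der{u_I}{\ell}\right)\vol\;.
\end{align*}
Comparing with $\scalar{w}{\destar{\phi_2}{\phi_1}v}{\phi_1}=\int_X\ssum{|J|=k-1}w_J\,\left(\destar{\phi_2}{\phi_1}v\right)_J\expp{-\phi_1}\vol$, which holds for every $w_J\in\Cinfc{0}$, and substituting back $u_I=v_I\expp{-\phi_2}$, I would read off
\begin{align*}
\left(\destar{\phi_2}{\phi_1}v\right)_J\;=\;-\expp{\phi_1}\ssum{|I|=k}\sum_{\ell=1}^{n}\sign{\ell J}{I}\,\der{\left(v_I\expp{-\phi_2}\right)}{\ell}\;,
\end{align*}
which is the second displayed equality of the statement. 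The first equality then follows at once: running the same computation with $\phi_1=\phi_2=0$ identifies the bracketed sum (without the prefactor $\expp{\phi_1}$) with the $J$-th coefficient of $\destar{0}{0}\left(\expp{-\phi_2}v\right)$, whence $\destar{\phi_2}{\phi_1}v=\expp{\phi_1}\destar{0}{0}\left(\expp{-\phi_2}v\right)$.

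I expect the main obstacle to be conceptual rather than computational: the adjoint is only densely defined and closed, so every manipulation above must be read as an identity of \emph{distributions} tested against $\Cinfc{0}$. The single derivatives $\der{\left(v_I\expp{-\phi_2}\right)}{\ell}$ are, a priori, merely distributions on $X$, and the genuine content of the lemma is that the hypothesis $v\in\dom\destar{\phi_2}{\phi_1}$ forces the signed combination of these derivatives to represent an honest $\Leb{k-1}{\phi_1}$-form, namely $\destar{\phi_2}{\phi_1}v$. I would therefore use the density of $\Cinfc{0}$ in $\Leb{0}{\phi_1}$ only to conclude the equality of the two sides \emph{as} $\mathrm{L}^2$-forms once membership in the domain has been assumed, being careful not to claim any classical differentiability of the coefficients $v_I$. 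The remaining bookkeeping with the signs $\sign{\ell J}{I}$ is routine.
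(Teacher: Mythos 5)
Your proposal is correct and follows essentially the same route as the paper's proof: test the adjunction identity against an arbitrary $w\in\Cinfc{k-1}$, expand $\de w$ in the increasing basis via the signs $\sign{\ell J}{I}$, integrate by parts (no boundary terms by compact support), and read off the coefficients by density of $\Cinfc{0}$. Your additional remarks on the distributional reading of $\der{\left(v_I\expp{-\phi_2}\right)}{\ell}$ are a sound clarification but do not change the argument.
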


\begin{proof}
By definition of $\destar{\phi_2}{\phi_1}$, for every $u\in\dom \de$, one has $\scalar{\de u}{v}{\phi_2} = \scalar{u}{\destar{\phi_2}{\phi_1} v}{\phi_1}$.
Hence, consider
$$ u \;:=:\; \ssum{|J|=k-1} u_J\, \de x^J \;\in\; \Cinfc{k-1} \;, $$
and compute
$$ \de u \;=\; \ssum{\substack{|J|=k-1\\ |I|=k}}\sum_{\ell=1}^{n} \sign{\ell J}{I} \der{u_J}{\ell} \de x^I \;. $$
The statement follows by computing
\begin{eqnarray*}
\scalar{\de u}{v}{\phi_2} &=& \int_X \ssum{\substack{|J|=k-1\\|I|=k}} \sum_{\ell=1}^{n} \sign{\ell J}{I} \der{u_J}{\ell} v_I \, \expp{-\phi_2}\, \vol \\[5pt]
 &=& -\int_X \ssum{\substack{|J|=k-1\\|I|=k}} \sum_{\ell=1}^{n} \sign{\ell J}{I} \der{\left(v_I \, \expp{-\phi_2}\right)}{\ell} u_J\, \vol
\end{eqnarray*}
and
$$ \scalar{u}{\destar{\phi_2}{\phi_1} v}{\phi_1} \;=\; \int_X \ssum{|J|=k-1} \left(\destar{\phi_2}{\phi_1} v\right)_J\, u_J\, \expp{-\phi_1}\, \vol \;, $$
where $\destar{\phi_2}{\phi_1} v=:\ssum{|J|=k-1} \left(\destar{\phi_2}{\phi_1} v\right)_J\,\de x^J$.
\end{proof}

\medskip

For any fixed $\phi\in\Cinf{0}$ and for any $j\in\In{n}$, define the operator
$$ \delta^\phi_j \colon \Cinf{0} \to \Cinf{0} \;, $$
where
$$ \delta^\phi_j(f) \;:=\; -\expp{\phi}\,\der{\left(f\,\expp{-\phi}\right)}{j} \;=\; \der{\phi}{j}\cdot f-\der{f}{j} \;. $$
The following lemma states that $\delta^\phi_j$ is the adjoint of $\der{}{j}$ in $\Leb{0}{\phi}$, and computes the commutator between $\delta^\phi_j$ and $\der{}{k}$ (compare with, e.g., \cite[pages 83-84]{hormander}).

\begin{lemma}\label{lemma:delta}
 Let $X$ be a domain in $\R^n$. Let $\phi\in\Cinf{0}$ and $j\in\In{n}$, and consider the operator $\delta^\phi_j\colon \Cinf{0} \to \Cinf{0}$. Then:
\begin{itemize}
 \item for every $w_1,w_2\in\Cinfc{0}$,
$$ \int_X w_1\cdot\der{w_2}{k}\expp{-\phi}\vol \;=\; \int_X\delta^\phi_k(w_1)\cdot w_2\,\expp{-\phi}\vol \;;$$
 \item for any $k\in\In{n}$, the following commutation formula holds in $\End{\Cinfc{0}}$:
$$ \left[\delta^\phi_j,\, \der{}{k}\right] \;=\; -\frac{\del^2\phi}{\del x^j\del x^k}\cdot \;.$$
\end{itemize}
\end{lemma}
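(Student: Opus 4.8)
The plan is to verify both assertions by direct computation: since $\delta^\phi_j$ is a first-order differential operator and $X$ carries the Euclidean structure, no input beyond integration by parts and the smoothness of the data is required.

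For the adjointness statement, first I would rewrite the right-hand side by means of the defining identity $\delta^\phi_k(w_1)=-\expp{\phi}\,\der{\left(w_1\expp{-\phi}\right)}{k}$, so that the two weights recombine and
$$ \int_X \delta^\phi_k(w_1)\cdot w_2\,\expp{-\phi}\vol \;=\; -\int_X \der{\left(w_1\expp{-\phi}\right)}{k}\,w_2\,\vol \;. $$
Since $w_1,w_2\in\Cinfc{0}$, the product $w_1\,\expp{-\phi}\,w_2$ has compact support inside the open set $X$; extending it by zero and integrating by parts in the single variable $x^k$ over $\R^n$ therefore produces no boundary contribution. This transfers the derivative onto $w_2$ and returns precisely $\int_X w_1\cdot\der{w_2}{k}\,\expp{-\phi}\vol$, which is the claimed formula.

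For the commutation relation I would apply both compositions to an arbitrary $f\in\Cinfc{0}$, using the explicit form $\delta^\phi_j(f)=\der{\phi}{j}\cdot f-\der{f}{j}$. Expanding one term gives $\delta^\phi_j\!\left(\der{f}{k}\right)=\der{\phi}{j}\cdot\der{f}{k}-\frac{\del^2 f}{\del x^j\del x^k}$, while the other gives $\der{}{k}\!\left(\delta^\phi_j(f)\right)=\frac{\del^2\phi}{\del x^k\del x^j}\cdot f+\der{\phi}{j}\cdot\der{f}{k}-\frac{\del^2 f}{\del x^k\del x^j}$. Subtracting, the mixed terms $\der{\phi}{j}\cdot\der{f}{k}$ cancel, and the two second derivatives of $f$ cancel by the equality of mixed partials (Schwarz's theorem), leaving $-\frac{\del^2\phi}{\del x^j\del x^k}\cdot f$, as required.

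I do not expect a genuine obstacle: both identities are routine once the operator is written out. The only points deserving care are that it is compact support, rather than any decay hypothesis, that kills the boundary term in the integration by parts, and that the cancellation in the commutator hinges on the smoothness of $\phi$ and $f$, through which the mixed second partials commute.
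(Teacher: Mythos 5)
Your proof is correct and is precisely the routine computation the paper has in mind: the paper states this lemma without proof, referring the reader to H\"ormander's book, and your two steps (integration by parts with no boundary term thanks to compact support, and a direct expansion of the commutator using Schwarz's theorem) are exactly the intended argument. Nothing to add.
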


\medskip

Finally, we prove the following estimate, which will be used in the proof of Theorem \ref{thm:vanishing} (we refer to \cite[\S4.2]{hormander}, or, e.g., \cite[Lemma O.3]{gunning-1} and \cite[\S8.3.1]{dellasala-saracco-simioniuc-tomassini} for its complex counterpart).

\begin{prop}\label{prop:stima}
 Let $X$ be a domain in $\R^n$ and $\phi,\,\psi\in\Cinf{0}$. Consider
 $$
 \xymatrix{
 \Leb{k-1}{\phi-2\psi} \ar@{-->}@/^1pc/[r]^{\de} & \Leb{k}{\phi-\psi} \ar@{-->}@/^1pc/[r]^{\de} \ar@{-->}@/^1pc/[l]^{\destar{\phi-\psi}{\phi-2\psi}} & \Leb{k+1}{\phi} \ar@{-->}@/^1pc/[l]^{\destar{\phi}{\phi-\psi}} \;.
 }
 $$
 Then, for any $\eta :=: \ssum{|I|=k}\eta_I \, \de x^I \in \Cinfc{k}$, one has
\begin{eqnarray*}
 \lefteqn{\int_X \ssum{\substack{|J|=k-1\\|I_1|=k\\|I_2|=k}}\sum_{\ell_1,\,\ell_2=1}^{n} \sign{\ell_1 J}{I_1}\sign{\ell_2 J}{I_2} \frac{\del^2\phi}{\del x^{\ell_1}\,\del x^{\ell_2}}\, \eta_{I_1}\, \eta_{I_2} \, \expp{-\phi} \vol} \\[5pt]
 && \leq \; C \cdot \left( \norma{\destar{\phi-\psi}{\phi-2\psi} \eta}{\phi-2\psi}^2 + \norma{\de\eta}{\phi}^2 + \int_X \ssum{|I|=k}\sum_{\ell=1}^{n}\left|\frac{\del\psi}{\del x^\ell}\right|^2\,\left|\eta_I\right|^2\, \expp{-\phi} \vol \right) \;,
\end{eqnarray*}
where $C:=:C(k,n)\in\N$ is a constant depending just on $k$ and $n$.
\end{prop}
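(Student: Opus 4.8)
The strategy is to reduce the claimed inequality to a single-weight Bochner-type identity built from Lemma \ref{lemma:delta}, treating the gradient of $\psi$ as a lower-order error extracted by Cauchy--Schwarz. First I would make the adjoint explicit. Applying Lemma \ref{lemma:d*} with $\phi_1=\phi-2\psi$ and $\phi_2=\phi-\psi$ and expanding $\der{(\eta_I\expp{-(\phi-\psi)})}{\ell}$, the weights recombine so that
\[
\left(\destar{\phi-\psi}{\phi-2\psi}\eta\right)_J \;=\; \expp{-\psi}\ssum{|I|=k}\sum_{\ell=1}^{n}\sign{\ell J}{I}\left(\delta^\phi_\ell\eta_I-\der{\psi}{\ell}\,\eta_I\right)\;,
\]
with $\delta^\phi_\ell$ as in Lemma \ref{lemma:delta}. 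Since the target norm carries the weight $\expp{-(\phi-2\psi)}$, the factor $\expp{2\psi}$ cancels, and writing $P_J:=\ssum{|I|=k}\sum_\ell\sign{\ell J}{I}\delta^\phi_\ell\eta_I$ (the $J$-component of the single-weight adjoint $\destar{\phi}{\phi}\eta$) and $Q_J:=\ssum{|I|=k}\sum_\ell\sign{\ell J}{I}\der{\psi}{\ell}\eta_I$, one gets $\norma{\destar{\phi-\psi}{\phi-2\psi}\eta}{\phi-2\psi}^2=\int_X\ssum{|J|=k-1}(P_J-Q_J)^2\expp{-\phi}\vol$. The elementary bound $P_J^2\leq 2(P_J-Q_J)^2+2Q_J^2$ together with a Cauchy--Schwarz estimate $\ssum{|J|=k-1}Q_J^2\leq C(k,n)\ssum{|I|=k}\sum_\ell|\der{\psi}{\ell}|^2|\eta_I|^2$ then yields
\[
\norma{\destar{\phi}{\phi}\eta}{\phi}^2\;\leq\; 2\,\norma{\destar{\phi-\psi}{\phi-2\psi}\eta}{\phi-2\psi}^2+2\,C(k,n)\int_X\ssum{|I|=k}\sum_{\ell=1}^n\left|\der{\psi}{\ell}\right|^2|\eta_I|^2\expp{-\phi}\vol\;,
\]
which already produces the third term on the right-hand side of the statement.

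It then remains to prove the single-weight estimate, namely that the Hessian integral is bounded by $\norma{\destar{\phi}{\phi}\eta}{\phi}^2+\norma{\de\eta}{\phi}^2$. For this I would expand $\norma{\destar{\phi}{\phi}\eta}{\phi}^2=\int_X\ssum{|J|=k-1} P_J^2\expp{-\phi}\vol$ and integrate by parts twice, using the adjunction $\scalar{\delta^\phi_\ell f}{g}{\phi}=\scalar{f}{\der{g}{\ell}}{\phi}$ and the commutator $[\delta^\phi_{\ell_2},\der{}{\ell_1}]=-\frac{\del^2\phi}{\del x^{\ell_1}\del x^{\ell_2}}$ of Lemma \ref{lemma:delta}. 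This turns each $\int_X(\delta^\phi_{\ell_1}\eta_{I_1})(\delta^\phi_{\ell_2}\eta_{I_2})\expp{-\phi}\vol$ into $\int_X(\der{\eta_{I_1}}{\ell_2})(\der{\eta_{I_2}}{\ell_1})\expp{-\phi}\vol+\int_X\frac{\del^2\phi}{\del x^{\ell_1}\del x^{\ell_2}}\eta_{I_1}\eta_{I_2}\expp{-\phi}\vol$; summed against $\sign{\ell_1 J}{I_1}\sign{\ell_2 J}{I_2}$, the second integral is exactly the Hessian term on the left-hand side.

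Expanding $\norma{\de\eta}{\phi}^2$ in the variables $\der{\eta_I}{m}$ and adding it to the first gradient contribution, the two combine, via a combinatorial sign identity, into the manifestly non-negative quantity $\ssum{|I|=k}\sum_{m=1}^n\int_X|\der{\eta_I}{m}|^2\expp{-\phi}\vol$. Hence $\norma{\destar{\phi}{\phi}\eta}{\phi}^2+\norma{\de\eta}{\phi}^2=(\text{Hessian term})+(\text{non-negative})$, and dropping the non-negative term gives the single-weight estimate; combining with the first paragraph proves the proposition with $C:=\max\{2,\,2\,C(k,n)\}$.

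The main obstacle is precisely the combinatorial sign identity at this last step. One must verify that the off-diagonal products (those with $I_1\neq I_2$, which necessarily differ in a single index, so that $J=I_1\cap I_2$ and $L=I_1\cup I_2$) coming from $\norma{\destar{\phi}{\phi}\eta}{\phi}^2$ and from $\norma{\de\eta}{\phi}^2$ carry opposite permutation signs and cancel, while the diagonal products $I_1=I_2$ reassemble $\sum_{\ell\in I}(\der{\eta_I}{\ell})^2+\sum_{m\notin I}(\der{\eta_I}{m})^2=\sum_{m=1}^n(\der{\eta_I}{m})^2$. This is the real-variable counterpart of the Kohn--Morrey--H\"ormander identity; once the indices are written out the cancellation is a routine, if delicate, bookkeeping of the signs $\sign{\ell J}{I}$, and everything else in the argument is elementary.
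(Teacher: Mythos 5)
Your proposal is correct and follows essentially the same route as the paper's proof: the quantity you call $\norma{\destar{\phi}{\phi}\eta}{\phi}^2$ is exactly the paper's intermediate integral $\int_X\ssum{|J|=k-1}\bigl|\ssum{|I|=k}\sum_{\ell=1}^{n}\sign{\ell J}{I}\delta^\phi_\ell\left(\eta_I\right)\bigr|^2\expp{-\phi}\vol$, which the paper likewise bounds by the two-weight adjoint norm plus the $\left|\de\psi\right|^2$ term via the same AM--GM step, and then rewrites as the Hessian term plus cross-gradient terms by the same double integration by parts with the commutator of Lemma \ref{lemma:delta}. The combinatorial sign cancellation you flag as the remaining obstacle is precisely the paper's expansion of $\left|\de\eta\right|^2$ in equation \eqref{eq:norma-de}, and it works out exactly as you describe.
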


\begin{proof}
It is straightforward to compute
$$ \de \eta \;=\; \ssum{\substack{|I|=k\\|H|=k+1}}\sum_{\ell=1}^{n} \sign{\ell I}{H}\der{\eta_I}{\ell}\de x^H $$
and, using Lemma \ref{lemma:d*},
\begin{eqnarray*}
 \destar{\phi-\psi}{\phi-2\psi} \eta &=& -\expp{-\psi} \ssum{\substack{|J|=k-1\\|I|=k}}\sum_{\ell=1}^{n}\sign{\ell J}{I} \left(\der{\eta_I}{\ell}-\der{\left(\phi-\psi\right)}{\ell}\eta_I\right) \, \de x^J \\[5pt]
 &=& \expp{-\psi} \ssum{\substack{|J|=k-1\\|I|=k}}\sum_{\ell=1}^{n}\sign{\ell J}{I} \left(\delta^\phi_\ell\left(\eta_I\right)-\der{\psi}{\ell}\,\eta_I\right) \, \de x^J \;.
\end{eqnarray*}
For every $J$ such that $|J|=k-1$, the previous equality gives
$$
\ssum{|I|=k}\sum_{\ell=1}^{n}\sign{\ell J}{I} \delta^\phi_\ell\left(\eta_I\right)\;=\; \expp{\psi} \left(\destar{\phi-\psi}{\phi-2\psi} \eta\right)_J  + \ssum{|I|=k}\sum_{\ell=1}^{n}\sign{\ell J}{I} \der{\psi}{\ell}\,\eta_I \;,
$$
where $\destar{\phi-\psi}{\phi-2\psi}\eta =: \ssum{|J|=k-1}\left(\destar{\phi-\psi}{\phi-2\psi}\eta\right)_J\de x^J$.\\
By the arithmetic mean--geometric mean inequality, one gets
\begin{eqnarray}
 \lefteqn{\int_X \ssum{|J|=k-1} \left| \ssum{|I|=k}\sum_{\ell=1}^{n}\sign{\ell J}{I} \delta^\phi_\ell\left(\eta_I\right) \right|^2 \, \expp{-\phi} \vol} \label{eq:stima-delta}\\[5pt]
 && \leq\; 2\, \int_X \ssum{|J|=k-1}\left(\left|\left(\destar{\phi-\psi}{\phi-2\psi}\eta\right)_J\right|^2\, \expp{2\psi} + \left|\ssum{|I|=k}\sum_{\ell=1}^n\sign{\ell J}{I} \der{\psi}{\ell}\eta_I\right|^2 \right)\, \expp{-\phi} \vol \nonumber\\[5pt]
 && \leq\; C \, \left(\norma{\destar{\phi-\psi}{\phi-2\psi}\eta}{\phi-2\psi}^2+\int_X\ssum{|I|=k}\sum_{\ell=1}^{n}\left|\der{\psi}{\ell}\right|^2\cdot\left|\eta_I\right|^2\, \expp{-\phi} \vol\right) \;,\nonumber
\end{eqnarray}
where $C:=:C(k,n)\in\N$ depends on $k$ and $n$ only.\\
Now, using Lemma \ref{lemma:delta}, one computes
\begin{eqnarray}
 \lefteqn{\int_X \ssum{|J|=k-1} \left| \ssum{|I|=k}\sum_{\ell=1}^{n} \sign{\ell J}{I} \delta^\phi_\ell\left(\eta_I\right) \right|^2 \, \expp{-\phi} \vol} \label{eq:normale-delta}\\[5pt]
 && =\; \ssum{|J|=k-1} \ssum{\substack{|I_1|=k\\|I_2|=k}}\sum_{\ell_1,\,\ell_2=1}^{n} \sign{\ell_1 J}{I_1} \sign{\ell_2 J}{I_2} \int_X \delta^\phi_{\ell_1}\left(\eta_{I_1}\right) \cdot \delta^\phi_{\ell_2}\left(\eta_{I_2}\right) \expp{-\phi} \vol \nonumber\\[5pt]
 && =\; \ssum{\substack{|J|=k-1\\|I_1|=k\\|I_2|=k}}\sum_{\ell_1,\,\ell_2=1}^{n} \sign{\ell_1 J}{I_1} \sign{\ell_2 J}{I_2} \int_X \left(\der{\eta_{I_1}}{\ell_2}\,\der{\eta_{I_2}}{\ell_1}+\frac{\del^2\phi}{\del x^{\ell_1}\,\del x^{\ell_2}}\,\eta_{I_1}\,\eta_{I_2}\right) \, \expp{-\phi} \vol \;.\nonumber
\end{eqnarray}
Now, note that
\begin{eqnarray}
 \left|\de\eta\right|^2 &=& \ssum{|H|=k+1} \left|\ssum{|I|=k}\sum_{\ell=1}^{n}\sign{\ell I}{H}\der{\eta_I}{\ell}\right|^2 \label{eq:norma-de}\\[5pt]
 &=& \ssum{|H|=k+1} \left(\ssum{\substack{|I_1|=k\\|I_2|=k}}\sum_{\ell_1,\,\ell_2=1}^{n}\sign{\ell_1 I_1}{H}\sign{\ell_2 I_2}{H}\der{\eta_{I_1}}{\ell_1}\der{\eta_{I_2}}{\ell_2}\right) \nonumber\\[5pt]
 &=& \ssum{\substack{|I_1|=k\\|I_2|=k}}\sum_{\ell_1,\,\ell_2=1}^{n}\sign{\ell_1 I_1}{\ell_2 I_2}\der{\eta_{I_1}}{\ell_1}\der{\eta_{I_2}}{\ell_2} \nonumber\\[5pt]
 &=& \ssum{|I|=k}\sum_{\ell=1}^{n} \left|\der{\eta_I}{\ell}\right|^2 - \ssum{\substack{|J|=k-1\\|I_1|=k\\|I_2|=k}} \sum_{\ell_1,\,\ell_2=1}^{n} \sign{\ell_1 J}{I_1} \sign{\ell_2 J}{I_2} \der{\eta_{I_1}}{\ell_2} \der{\eta_{I_2}}{\ell_1} \;.\nonumber
\end{eqnarray}
Hence, in view of \eqref{eq:norma-de}, \eqref{eq:normale-delta}, \eqref{eq:stima-delta}, we get
\begin{eqnarray*}
 \lefteqn{\int_X \ssum{\substack{|J|=k-1\\|I_1|=k\\|I_2|=k}}\sum_{\ell_1,\,\ell_2=1}^{n} \sign{\ell_1 J}{I_1}\sign{\ell_2 J}{I_2} \frac{\del^2\phi}{\del x^{\ell_1}\,\del x^{\ell_2}}\, \eta_{I_1}\, \eta_{I_2} \, \expp{-\phi} \vol} \\[5pt]
 && \leq\; \int_X \left(\ssum{\substack{|J|=k-1\\|I_1|=k\\|I_2|=k}}\sum_{\ell_1,\,\ell_2=1}^{n} \sign{\ell_1 J}{I_1}\sign{\ell_2 J}{I_2} \frac{\del^2\phi}{\del x^{\ell_1}\,\del x^{\ell_2}}\, \eta_{I_1}\, \eta_{I_2}+\ssum{|I|=k}\sum_{\ell=1}^{n}\left|\frac{\del \eta_I}{\del x^\ell}\right|^2\right) \, \expp{-\phi} \vol \\[5pt]
 && =\; \int_X \left(\ssum{|J|=k-1} \left| \ssum{|I|=k}\sum_{\ell=1}^{n}\sign{\ell J}{I} \delta^\phi_\ell\left(\eta_I\right) \right|^2 + \ssum{|H|=k+1}\left|\left(\de \eta\right)_H\right|^2\right) \, \expp{-\phi} \vol \\[5pt]
 && \leq \; C \cdot \left( \norma{\destar{\phi-\psi}{\phi-2\psi}\eta}{\phi-2\psi}^2 + \norma{\de\eta}{\phi}^2 + \int_X \ssum{|I|=k}\sum_{\ell=1}^{n}\left|\frac{\del\psi}{\del x^\ell}\right|^2\,\left|\eta_I\right|^2\, \expp{-\phi} \vol \right) \;,
\end{eqnarray*}
concluding the proof.
\end{proof}

\begin{rem}\label{rem:stima}
 The argument in the proof of Proposition \ref{prop:stima} actually proves the following stronger estimate, which will be used in the regularization process in Theorem \ref{thm:vanishing}.\\
 {\itshape
 Let $X$ be a domain in $\R^n$ and $\phi,\,\psi\in\Cinf{0}$. Consider
 $$
 \xymatrix{
 \Leb{k-1}{\phi-2\psi} \ar@{-->}@/^1pc/[r]^{\de} & \Leb{k}{\phi-\psi} \ar@{-->}@/^1pc/[r]^{\de} \ar@{-->}@/^1pc/[l]^{\destar{\phi-\psi}{\phi-2\psi}} & \Leb{k+1}{\phi} \ar@{-->}@/^1pc/[l]^{\destar{\phi-\psi}{\phi-2\psi}} \;.
 }
 $$
 Then, for any $\eta :=: \ssum{|I|=k}\eta_I \, \de x^I \in \Cinfc{k}$, one has
 \begin{eqnarray*}
 \lefteqn{\int_X \left(\ssum{\substack{|J|=k-1\\|I_1|=k\\|I_2|=k}}\sum_{\ell_1,\,\ell_2=1}^{n} \sign{\ell_1 J}{I_1}\sign{\ell_2 J}{I_2} \frac{\del^2\phi}{\del x^{\ell_1}\,\del x^{\ell_2}}\, \eta_{I_1}\, \eta_{I_2}+\ssum{|I|=k}\sum_{\ell=1}^{n}\left|\frac{\del \eta_I}{\del x^\ell}\right|^2\right) \, \expp{-\phi} \vol} \\[5pt]
 && \leq \; C \cdot \left( \norma{\destar{\phi-\psi}{\phi-2\psi}\eta}{\phi-2\psi}^2 + \norma{\de\eta}{\phi}^2 + \int_X \ssum{|I|=k}\sum_{\ell=1}^{n}\left|\frac{\del\psi}{\del x^\ell}\right|^2\,\left|\eta_I\right|^2\, \expp{-\phi} \vol \right) \;,
 \end{eqnarray*}
 where $C:=:C(k,n)\in\N$ is a constant depending just on $k$ and $n$.
 }
\end{rem}

\section{Proof of the main theorem}\label{sec:main-thm}

We are ready to prove the following vanishing theorem for the higher-degree de Rham cohomology groups of a strictly $p$-convex domain in $\R^n$
(for a different proof, involving Morse theory, compare \cite[Theorem 1]{sha} by J.-P. Sha, and \cite[Theorem 1]{wu} by H. Wu,
see also \cite[Proposition 5.7]{harvey-lawson-2}).

\begin{thm}\label{thm:vanishing}
 Let $X$ be a strictly $p$-convex domain in $\R^n$. Then $H^k_{dR}(X;\R)=\{0\}$ for every $k\geq p$.
\end{thm}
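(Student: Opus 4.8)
The plan is to run the $\mathrm{L}^2$-machinery of L. H\"ormander and A. Andreotti--E. Vesentini on the weighted de Rham complex, taking Proposition \ref{prop:stima} (in its sharpened form, Remark \ref{rem:stima}) as the basic a priori inequality and feeding in the positivity supplied by strict $p$-convexity. The first point is a linear-algebra observation identifying which degrees are accessible: the quadratic form on the left-hand side of Proposition \ref{prop:stima} is exactly $\scalard{D_{G^{-1}\Hess\phi}^{[k]}\eta}{\eta}$ evaluated pointwise and integrated against $\expp{-\phi}\vol$, and the eigenvalues of $D_{G^{-1}\Hess\phi}^{[k]}$ are the sums $\lambda_{i_1}+\cdots+\lambda_{i_k}$ of the eigenvalues of $\Hess\phi$. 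If $\Hess\phi\in\interno{\mathcal{P}^{(1)}_p}$, i.e. the sum of the $p$ smallest eigenvalues is positive, then $\lambda_p>0$, hence $\lambda_j>0$ for every $j\geq p$, hence $\lambda_{i_1}+\cdots+\lambda_{i_k}>0$ for every $k\geq p$; so $D_{G^{-1}\Hess\phi}^{[k]}$ is positive definite in every degree $k\geq p$. This is the only place where the restriction $k\geq p$ enters.

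By Theorem \ref{thm:p-positive-exhaustion-functions}, I fix a smooth proper exhaustion $u\in\interno{\PSH^{(1)}_p\left(X,g\right)}$ and take the principal weight $\phi:=\chi\circ u$ with $\chi$ smooth, convex and increasing. Since $\Hess(\chi\circ u)=\chi'(u)\,\Hess u+\chi''(u)\,\de u\otimes\de u$ and the rank-one second summand is positive semi-definite, the curvature term $D_{G^{-1}\Hess\phi}^{[k]}$ dominates $\chi'(u)\,D_{G^{-1}\Hess u}^{[k]}$, whose smallest eigenvalue is a positive continuous function on each sublevel set $\left\{u\leq c\right\}$ when $k\geq p$. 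Letting $\psi\in\Cinf{0}$ be an auxiliary weight (also built from $u$) with controlled gradient and choosing $\chi$ to grow fast enough, the positive Hessian term absorbs both the error integral $\int_X\ssum{|I|=k}\sum_{\ell=1}^{n}\left|\der{\psi}{\ell}\right|^2\left|\eta_I\right|^2\expp{-\phi}\vol$ and the discrepancy between the weights $\phi$ and $\phi-\psi$, producing the basic estimate
\[
\norma{\eta}{\phi-\psi}^2\;\leq\;C\left(\norma{\destar{\phi-\psi}{\phi-2\psi}\eta}{\phi-2\psi}^2+\norma{\de\eta}{\phi}^2\right)
\]
for every $\eta\in\Cinfc{k}$ with $k\geq p$. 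One must moreover take $\chi$ fast-growing enough that any prescribed smooth closed $k$-form becomes square-integrable for $\phi$, which is possible because $u$ is proper.

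The decisive step, which I expect to be the main obstacle, is to upgrade this inequality from $\Cinfc{k}$ to all $\eta\in\dom\destar{\phi-\psi}{\phi-2\psi}\cap\dom\de$. Since $X$ carries the generally incomplete Euclidean metric, smooth compactly supported forms need not be dense in the graph norm, so the estimate does not extend for free. This is exactly where the stronger inequality of Remark \ref{rem:stima}, which additionally bounds $\int_X\ssum{|I|=k}\sum_{\ell=1}^{n}\left|\der{\eta_I}{\ell}\right|^2\expp{-\phi}\vol$, becomes essential: together with the slack furnished by the auxiliary weight $\psi$, it lets me regularize an arbitrary $\eta$ by Friedrichs mollifiers and cut-offs adapted to the sublevel sets of $u$ while keeping simultaneous control of $\eta$, $\de\eta$, $\destar{\phi-\psi}{\phi-2\psi}\eta$ and the first derivatives of $\eta$, so that the cut-off and mollification errors are absorbable and the basic estimate passes to the limit.

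Once the basic estimate holds on $\dom\destar{\phi-\psi}{\phi-2\psi}\cap\dom\de$, I would apply the standard functional-analytic solvability lemma: setting $T:=\de\colon\Leb{k-1}{\phi-2\psi}\to\Leb{k}{\phi-\psi}$ and $S:=\de\colon\Leb{k}{\phi-\psi}\to\Leb{k+1}{\phi}$, so that $S\circ T=0$, the estimate $\left\|\eta\right\|^2\leq C\left(\left\|T^\ast\eta\right\|^2+\left\|S\eta\right\|^2\right)$ on $\dom T^\ast\cap\dom S$ forces every $\de$-closed $f\in\Leb{k}{\phi-\psi}$ to lie in the range of $T$, i.e. $f=\de\alpha$ for some $\alpha\in\Leb{k-1}{\phi-2\psi}$. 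Finally, given a smooth closed $k$-form $f$ (made square-integrable by the choice of $\chi$), I take the minimal $\mathrm{L}^2$-solution $\alpha$, i.e. the one orthogonal to $\ker\de$; it then also satisfies $\duale{\de}\,\alpha=0$ for the relevant weighted codifferential, so that $\de\oplus\duale{\de}$ is an overdetermined elliptic operator with smooth data $\left(f,0\right)$, and elliptic regularity makes $\alpha$ smooth. Thus $f$ is exact already in the smooth de Rham complex; as $f$ and $k\geq p$ were arbitrary, this yields $H^k_{dR}(X;\R)=\{0\}$ for every $k\geq p$.
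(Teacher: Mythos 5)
Your proposal is correct and follows essentially the same route as the paper: the eigenvalue computation for $D^{[k]}_{g^{-1}\Hess\rho}$ isolating the degrees $k\geq p$, the three weights $\phi-2\psi,\ \phi-\psi,\ \phi$ with $\chi\circ\rho$ chosen to absorb both $\left|\de\psi\right|^2$ and the cut-off errors, the graph-norm density step, H\"ormander's duality lemma for existence, and regularization of the minimal solution. The only cosmetic differences are that the paper carries out the final elliptic regularity by hand, bootstrapping through the local Sobolev spaces via Remark \ref{rem:stima} with $\phi=\psi=0$ (which is where that sharpened estimate is actually used --- the density step needs only the bound $\left|\de\rho_\nu\right|^2\leq\expp{\psi}$ plus dominated convergence and Friedrichs mollification, not the a priori inequality), whereas you invoke standard elliptic regularity for the overdetermined system $\left(\de,\destar{0}{0}\right)$.
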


\begin{proof}
We are going to prove that every $\de$-closed $k$-form $\eta\in\Cinf{k}$ is $\de$-exact, namely, there exists $\alpha\in\Cinf{k-1}$ such that $\eta=\de\alpha$; the statement of the theorem is a direct consequence of this result.
Let us split the proof in the following steps.

\paragrafo{1}{Definitions of the weight functions and other notations}
Being $X$ a strictly $p$-convex domain in $\R^n$, by F.~R. Harvey and H.~B. Lawson's \cite[Theorem 4.8]{harvey-lawson-1} (see also \cite[Theorem 5.4]{harvey-lawson-2}), there exists a smooth proper strictly $p$-pluri-sub-harmonic exhaustion function
$$ \rho \;\in\; \interno{\PSH^{(1)}_p\left( X,\, g\right)} \cap \Cinf{0} \;,$$
where $g$ is the metric on $X$ induced by the Euclidean metric on $\R^n$.\\
For every $m\in\N$, consider the compact set
$$ K^{(m)} \;:=\; \left\{x\in X \st \rho(x)\leq m \right\} \;,$$
and define
$$ L^{(m)} \;:=\; \min_{K^{(m)}} \lambda_{1}^{[k]} \;>\; 0 \;,$$
where, for every $x\in X$, the real numbers $\lambda_{1}^{[k]}(x)\leq\cdots\leq \lambda_{\binom{n}{k}}^{[k]}(x)$ are the ordered eigen-values of $D^{[k]}_{g^{-1}\Hess \rho(x)}\in\Hom{\wedge^kT_xX}{\wedge^kT_xX}$, and $\lambda_1(x)\leq\cdots\leq\lambda_n(x)$ are the ordered eigen-values of $g^{-1}\Hess \rho(x)\in\Hom{T_xX}{T_xX}$; indeed, note that, for every $x\in X$,
$$ \lambda_{1}^{[k]}(x) \;=\; \lambda_1(x)+\cdots+\lambda_{k}(x) \;\geq\; \lambda_1(x)+\cdots+\lambda_p(x) \;>\; 0 \;,$$
being $\rho$ strictly $p$-pluri-sub-harmonic, and that the function $X\ni x\mapsto \lambda_{1}^{[k]}(x)\in\R$ is continuous.\\
Fix $\left\{\rho_\nu\right\}_{\nu\in\N}\subset \Cinfc{0}$ such that
\begin{inparaenum}[(\itshape i\upshape)]
 \item $0\leq \rho_\nu \leq 1$ for every $\nu\in\N$, and
 \item for every compact set $K\subseteq X$, there exists $\nu_0:=:\nu_0(K)\in\N$ such that $\rho_\nu\lfloor_{K}=1$ for every $\nu\geq \nu_0$.
\end{inparaenum}\\
Then, we can choose $\psi\in\Cinf{0}$ such that, for every $\nu\in\N$,
$$ \left|\de \rho_\nu\right|^2 \;\leq\; \expp{\psi} \;.$$
For every $m\in\N$, set\
$$ \gamma^{(m)} \;:=\; \max_{K^{(m)}} \left(C\cdot \left|\de\psi\right|^2+\expp{\psi}\right) \;, $$
where $C:=:C(n,k)$ is the constant in Proposition \ref{prop:stima}.\\
Fix $\chi\in\mathcal{C}^\infty\left(\R;\R\right)$ such that 
\begin{inparaenum}[(\itshape i\upshape)]
 \item $\chi'>0$,
 \item $\chi''>0$, and
 \item $\chi'\lfloor_{\left(-\infty,\,m\right]}>\frac{\gamma^{(m)}}{L^{(m)}}$, for every $m\in\N$.
\end{inparaenum}
Define
$$ \phi \;:=\; \chi \circ \rho \;: $$
then, $\phi\in \interno{\PSH^{(1)}_p\left( X,\, g\right)} \cap \Cinf{0}$; furthermore
$$ \frac{\del^2\phi}{\del x^{\ell_1}\del x^{\ell_2}} \;=\; \chi''\circ\rho \cdot \frac{\del\rho}{\del x^{\ell_1}} \cdot \frac{\del\rho}{\del x^{\ell_2}}+\chi'\circ\rho \cdot \frac{\del^2\rho}{\del x^{\ell_1}\del x^{\ell_2}} \;.$$
Choose $\mu\in\Cinf{0}$ such that, for every $m\in\N$,
$$ \chi'\circ \rho\lfloor_{K^{(m)}}\cdot L^{(m)} \;\geq\; \mu\lfloor_{K^{(m)}} \;\geq\; \gamma^{(m)} \;.$$

\paragrafo{2}{For every $\eta\in\mathcal{C}^\infty_{\mathrm{c}}\left(X;\wedge^kT^*X\right)$, it holds $\norma{\eta}{\phi-\psi}^2\leq C \cdot \left(\norma{\destar{\phi-\psi}{\phi-2\psi}\eta}{\phi-2\psi}^2 + \norma{\de\eta}{\phi}^2 \right)$}
Since
$$ D^{\left[k\right]}_{g^{-1}\Hess \rho} \;=\; \left(\ssum{|J|=k-1}\sum_{\ell_1,\, \ell_2=1}^{n} \sign{\ell_1 J}{I_1} \sign{\ell_2 J}{I_2} \frac{\del^2\rho}{\del x^{\ell_1}\del x^{\ell_2}}\right)_{I_1, I_2} \;\in\; \Hom{\wedge^{k}TX}{\wedge^{k}TX} \;,$$
one estimates
\begin{eqnarray*}
 \lefteqn{\ssum{\substack{|J|=k-1\\|I_1|=k\\|I_1|=k}}\sum_{\ell_1,\, \ell_2=1}^{n} \sign{\ell_1 J}{I_1} \sign{\ell_2 J}{I_2} \frac{\del^2\phi}{\del x^{\ell_1} \del x^{\ell_2}}\,\eta_{I_1}\,\eta_{I_2}} \\[5pt]
 &=& \ssum{\substack{|J|=k-1\\|I_1|=k\\|I_1|=k}}\sum_{\ell_1,\, \ell_2=1}^{n} \sign{\ell_1 J}{I_1} \sign{\ell_2 J}{I_2} \chi''\circ \rho \cdot \der{\rho}{\ell_1}\der{\rho}{\ell_2}\eta_{I_1}\,\eta_{I_2} \\[5pt]
 && +\; \ssum{\substack{|J|=k-1\\|I_1|=k\\|I_1|=k}}\sum_{\ell_1,\, \ell_2=1}^{n} \sign{\ell_1 J}{I_1} \sign{\ell_2 J}{I_2} \chi'\circ \rho\cdot \frac{\del^2\rho}{\del x^{\ell_1}\del x^{\ell_2}}\, \eta_{I_1}\, \eta_{I_2} \\[5pt]
 &=& \ssum{|J|=k-1}\chi''\circ \rho \cdot \left|\ssum{|I|=k}\sum_{\ell=1}^{n}\sign{\ell J}{I}\der{\rho}{\ell}\eta_I \right|^2 \\[5pt]
 && +\; \chi'\circ\rho \cdot \ssum{\substack{|J|=k-1\\|I_1|=k\\|I_1|=k}}\sum_{\ell_1,\, \ell_2=1}^{n} \sign{\ell_1 J}{I_1} \sign{\ell_2 J}{I_2} \frac{\del^2\rho}{\del x^{\ell_1}\del x^{\ell_2}}\, \eta_{I_1}\, \eta_{I_2} \\[5pt]
 &\geq& \chi'\circ \rho \cdot \lambda_{1}^{[k]}(x) \cdot \ssum{|I|=k}\left|\eta_I\right|^2 \\[5pt]
 &\geq& \mu \cdot \ssum{|I|=k}\left|\eta_I\right|^2 \;.
\end{eqnarray*}
Hence, using Proposition \ref{prop:stima}, we get that, for every $\eta\in\Cinfc{k}$,
\begin{eqnarray*}
 \norma{\eta}{\phi-\psi}^2 &=& \int_X \ssum{|I|=k}\left|\eta_I\right|^2\, \expp{-\left(\phi-\psi\right)}\vol \\[5pt]
 &\leq& \int_X \ssum{|I|=k} \left(\mu-C\cdot\sum_{\ell=1}^{n}\left|\frac{\del\psi}{\del x^\ell}\right|^2\right)\cdot \left|\eta_I\right|^2 \, \expp{-\phi} \vol \\[5pt]
 &\leq& \int_X \left(\ssum{\substack{|J|=k-1\\|I_1|=k\\|I_2|=k}}\sum_{\ell_1,\,\ell_2=1}^{n} \sign{\ell_1 J}{I_1}\sign{\ell_2 J}{I_2} \frac{\del^2\phi}{\del x^{\ell_1}\,\del x^{\ell_2}}\, \eta_{I_1}\, \eta_{I_2} \right. \\[5pt]
 && \left. - C\cdot \ssum{|I|=k}\sum_{\ell=1}^{n}\left|\frac{\del\psi}{\del x^\ell}\right|^2\,\left|\eta_I\right|^2\right)\, \expp{-\phi} \vol \\[5pt]
 &\leq&  C \cdot \left( \norma{\destar{\phi-\psi}{\phi-2\psi}\eta}{\phi-2\psi}^2 + \norma{\de\eta}{\phi}^2 \right) \;, 
\end{eqnarray*}
where $C:=:C(k,n)\in\N$ is the constant in Proposition \ref{prop:stima}, depending just on $k$ and $n$.

\paragrafo{3}{$\mathcal{C}^\infty_{\mathrm{c}}\left(X;\,\wedge^kT^*X\right)$ is dense in $\left(\dom\de\cap\dom\destar{\phi-\psi}{\phi-2\psi},\, \norma{\sspace}{\phi-\psi}+\norma{\destar{\phi-\psi}{\phi-2\psi}\sspace}{\phi-2\psi}+\norma{\de\sspace}{\phi}\right)$}
Consider
$$
\xymatrix{
\Leb{k-1}{\phi-2\psi} \ar@{-->}@/^1pc/[r]^{\de} & \Leb{k}{\phi-\psi} \ar@{-->}@/^1pc/[r]^{\de} \ar@{-->}@/^1pc/[l]^{\destar{\phi-\psi}{\phi-2\psi}} & \Leb{k+1}{\phi} \ar@{-->}@/^1pc/[l]^{\destar{\phi-\psi}{\phi-2\psi}} \;.
}
$$
Fix $\eta\in\dom\de\cap\dom\destar{\phi-\psi}{\phi-2\psi}\subseteq\Leb{k}{\phi-\psi}$. Firstly, we prove that $\left\{\rho_\nu\,\eta\right\}_{\nu\in\N}\subset\dom\de\cap\dom\destar{\phi-\psi}{\phi-2\psi}\subseteq\Leb{k}{\phi-\psi}$ (where $\left\{\rho_\nu\right\}_{\nu\in\N}\subset\Cinfc{0}$ has been defined in {\itshape Step 1}) is a sequence of functions having compact support and converging to $\eta$ in the graph norm $\norma{\sspace}{\phi-\psi}+\norma{\destar{\phi-\psi}{\phi-2\psi}\sspace}{\phi-2\psi}+\norma{\de\sspace}{\phi}$. Indeed,
\begin{eqnarray*}
 \left| \de\left(\rho_\nu\,\eta\right)-\rho_\nu\,\de\eta\right|^2 \,\expp{-\phi} &=& \left|\eta\right|^2\cdot\left|\de\rho_\nu\right|^2 \,\expp{-\phi} \\[5pt]
 &\leq& \left|\eta\right|^2\,\expp{-\left(\phi-\psi\right)} \;\in\; \Leb{k}{0} \;,
\end{eqnarray*}
hence, by Lebesgue's dominated convergence theorem, $\norma{\de\left(\rho_\nu\,\eta\right)-\rho_\nu\,\de\eta}{\phi}\to 0$ as $\nu\to+\infty$. Furthermore, for every $\nu\in\N$, note that $\rho_\nu\,\eta\in\dom\destar{\phi-\psi}{\phi-2\psi}$, since the map
$$ \Leb{k-1}{\phi-2\psi} \;\supseteq\; \dom \de \;\ni\; u \mapsto \scalar{\rho_\nu\,\eta}{\de u}{\phi-\psi} \;\in\; \R $$
is continuous, being
\begin{eqnarray*}
 \scalar{\rho_\nu\,\eta}{\de u}{\phi-\psi} &=& \scalar{\eta}{\de\left(\rho_\nu\, u\right)}{\phi-\psi} - \scalar{\eta}{\de\rho_\nu\wedge u}{\phi-\psi} \\[5pt]
 &=& \scalar{\rho_\nu\,\destar{\phi-\psi}{\phi-2\psi}\eta}{u}{\phi-2\psi} - \scalar{\eta}{\de\rho_\nu\wedge u}{\phi-\psi} \;,
\end{eqnarray*}
hence, by the Riesz representation theorem, there exists $\tilde\eta=:\destar{\phi-\psi}{\phi-2\psi}\left(\rho_\nu\,\eta\right)\in \Leb{k-1}{\phi-2\psi}$ such that, for every $u\in \dom\de \subseteq \Leb{k-1}{\phi-2\psi}$, it holds $\scalar{\rho_\nu\,\eta}{\de u}{\phi-\psi}=\scalar{\tilde\eta}{u}{\phi-2\psi}$. Lastly, note that, for every $u\in\dom\de\subseteq \Leb{k-1}{\phi-2\psi}$,
\begin{eqnarray*}
 \left|\scalar{\destar{\phi-\psi}{\phi-2\psi}\left(\rho_\nu\,\eta\right)-\rho_\nu\,\destar{\phi-\psi}{\phi-2\psi}\,\eta}{u}{\phi-2\psi}\right| &=& \left|\scalar{\rho_\nu\,\eta}{\de u}{\phi-\psi}-\scalar{\destar{\phi-\psi}{\phi-2\psi}\eta}{\rho_\nu\,u}{\phi-2\psi}\right| \\[5pt]
 &=& \left|\scalar{\eta}{\de\rho_\nu\wedge u}{\phi-\psi}\right| \\[5pt]
 &\leq& \norma{\eta}{\phi-\psi}\cdot\norma{\de\rho_\nu\wedge u}{\phi-\psi} \,,
\end{eqnarray*}
hence, by Lebesgue's dominated convergence theorem, $\norma{\destar{\phi-\psi}{\phi-2\psi}\left(\rho_\nu\,\eta\right)-\rho_\nu\,\destar{\phi-\psi}{\phi-2\psi}\eta}{\phi-2\psi}\to 0$ as $\nu\to+\infty$. This shows that $\rho_\nu\,\eta\to \eta$ as $\nu\to+\infty$ with respect to the graph norm.\\
Hence, we may suppose that $\eta\in\dom\de\cap\dom\destar{\phi-\psi}{\phi-2\psi}\subseteq\Leb{k}{\phi-\psi}$ has compact support. Let $\left\{\Phi_\varepsilon\right\}_{\varepsilon\in\R}\subseteq\mathcal{C}^\infty\left(\R^n;\R\right)$ be a family of positive mollifiers, that is, $\Phi_\varepsilon:=\varepsilon^{-n}\,\Phi\left(\frac{\sspace}{\varepsilon}\right)$, where
\begin{inparaenum}[(\itshape i\upshape)]
 \item $\Phi\in\mathcal{C}^\infty_{\textrm{c}}\left(\R^n;\R\right)$,
 \item $\int_{\R^n}\Phi \vol_{\R^n} =1$,
 \item $\lim_{\varepsilon\to 0}\Phi_\varepsilon=\delta$, where $\delta$ is the Dirac delta function, and
 \item $\Phi\geq 0$.
\end{inparaenum}
Consider the convolution $\left\{\eta * \Phi_\varepsilon\right\}_{\varepsilon\in \R}\subset \Cinfc{k}$; we prove that $\eta * \Phi_\varepsilon\to \eta$ as $\varepsilon\to 0$ with respect to the graph norm. Clearly, $\norma{\eta-\eta*\Phi_\varepsilon}{\phi-\psi}\to0$ as $\varepsilon\to 0$. Since $\de\left(\eta*\Phi_\varepsilon\right)=\de\eta*\Phi_\varepsilon$, one has that $\norma{\de\left(\eta*\Phi_\varepsilon\right)-\de\eta}{\phi}\to 0$ as $\varepsilon\to 0$. Lastly, write
$$ \destar{\phi-\psi}{\phi-2\psi} \;=\; \expp{-\psi} \left(\destar{0}{0}+A_{\phi-\psi,\,\phi-2\psi}\right) \;,$$
where $\destar{0}{0}$
is a differential operator with constant coefficients, and $A_{\phi-\psi,\,\phi-2\psi}$ is a differential operator of order zero defined, for every $v\in \Leb{k}{\phi-\psi}$, as
$$ A_{\phi-\psi,\,\phi-2\psi}\left(v\right) \;:=\;  \ssum{\substack{|J|=k-1\\|I|=k}}\sum_{\ell=1}^{n}\sign{\ell J}{I} \der{\left(\phi-\psi\right)}{\ell}\cdot \eta\de x^J \;; $$
hence
\begin{eqnarray*}
\left(\destar{0}{0} +A_{\phi-\psi,\,\phi-2\psi}\right) \left(\eta*\Phi_\varepsilon\right) &=& \left(\left(\destar{0}{0}+A_{\phi-\psi,\,\phi-2\psi}\right) \left(\eta\right)\right) *\Phi_\varepsilon - \left(A_{\phi-\psi,\,\phi-2\psi}\eta\right)*\Phi_\varepsilon + A_{\phi-\psi,\,\phi-2\psi}\left(\eta*\Phi_\varepsilon\right) \\[5pt]
 &\to& \left(\destar{0}{0}+A_{\phi-\psi,\,\phi-2\psi}\right)\left(\eta\right)
\end{eqnarray*}
as $\varepsilon\to0$ in $\Leb{k-1}{\phi-2\psi}$; having $\eta$ compact support, it follows that $\destar{\phi-\psi}{\phi-2\psi}\left(\eta*\Phi_\varepsilon \right)\to \destar{\phi-\psi}{\phi-2\psi}\left(\eta\right)$ as $\varepsilon\to 0$ in $\Leb{k-1}{\phi-2\psi}$.

\paragrafo{4}{If $\norma{\eta}{\phi-\psi}^2\leq C\cdot\left(\norma{\destar{\phi-\psi}{\phi-2\psi}\eta}{\phi-2\psi}^2+\norma{\de\eta}{\phi}^2\right)$ holds for every $\mathcal{C}^\infty_{\mathrm{c}}\left(X;\,\wedge^kT^*X\right)$, then it holds for every $\eta\in\dom\de\cap\dom\destar{\phi-\psi}{\phi-2\psi}$}
Let $\eta\in\dom\de\cap\dom\destar{\phi-\psi}{\phi-2\psi}$. By {\itshape Step 3}, take $\left\{\eta_j\right\}_{j\in\N}\subset \Cinfc{k}$ such that $\eta_j\to \eta$ as $j\to+\infty$ in the graph norm. Since, for every $j\in\N$, one has $\norma{\eta_j}{\phi-\psi}^2\leq C\cdot\left(\norma{\destar{\phi-\psi}{\phi-2\psi}\eta_j}{\phi-2\psi}^2+\norma{\de\eta_j}{\phi}^2\right)$, and since $\norma{\eta_j-\eta}{\phi-\psi}\to 0$, $\norma{\destar{\phi-\psi}{\phi-2\psi}\eta_j-\destar{\phi-\psi}{\phi-2\psi}\eta}{\phi-2\psi}\to 0$ and $\norma{\de\eta_j-\de\eta}{\phi}\to 0$ as $j\to+\infty$, we get that also $\norma{\eta}{\phi-\psi}^2\leq C\cdot\left(\norma{\destar{\phi-\psi}{\phi-2\psi}\eta}{\phi-2\psi}^2+\norma{\de\eta}{\phi}^2\right)$.

\paragrafo{5}{Existence of a solution in $\mathrm{L}^2_{\mathrm{loc}}\left(X;\wedge^{k}T^*X\right)$} We prove here that the operator
$$ \de\colon \Leb{k-1}{\phi-2\psi} \dashrightarrow \ker\left(\de\colon \Leb{k}{\phi-\psi}\dashrightarrow\Leb{k+1}{\phi}\right) $$
is surjective, hence, for every $\eta\in\ker\left(\de\colon \Leb{k}{\phi-\psi}\dashrightarrow\Leb{k+1}{\phi}\right)$, the equation $\de\alpha=\eta$ has a solution $\alpha$ in $\Leb{k-1}{\phi-\psi}\subseteq \Lebloc{k-1}$.\\
We recall (see, e.g., \cite[Lemma 4.1.1]{hormander}) that, given two Hilbert spaces $\left(H_1,\,\scalar{\sspace}{\ssspace}{H_1}\right)$ and $\left(H_2,\,\scalar{\sspace}{\ssspace}{H_2}\right)$, and a densely-defined closed operator $T\colon H_1 \dashrightarrow H_2$, whose adjoint is $T^*\colon H_2\dashrightarrow H_1$, if $F\subseteq H_2$ is a closed subspace such that $\imm T\subseteq F$, then the following conditions are equivalent:
\begin{enumerate}
 \item $\imm T=F$;
 \item there exists $C>0$ such that, for every $y\in \dom T^* \cap F$,
 $$ \norma{y}{H_2} \;\leq\; C\cdot \norma{T^*y}{H_1} \;. $$
\end{enumerate}
Hence, consider
$$ \de\colon \Leb{k-1}{\phi-2\psi} \dashrightarrow \Leb{k}{\phi-\psi} $$
and
\begin{eqnarray*}
\Leb{k}{\phi-\psi} \;\supseteq\; F &:=& \ker\left(\de\colon \Leb{k}{\phi-\psi}\dashrightarrow\Leb{k+1}{\phi}\right) \\[5pt]
  &\supseteq& \imm \left(\de\colon \Leb{k-1}{\psi-2\psi}\dashrightarrow\Leb{k}{\phi-\psi}\right) \;.
\end{eqnarray*}
By {\itshape Step 4}, for every $\eta\in \dom\destar{\phi-\psi}{\phi-2\psi}\cap F \subseteq \dom\de\cap\dom\destar{\phi-\psi}{\phi-2\psi}$, it holds that
$$ \norma{\eta}{\phi-\psi}^2 \;\leq\; C \, \norma{\destar{\phi-\psi}{\phi-2\psi}\eta}{\phi-2\psi}^2 \;,$$
from which it follows that
$$ F \;=\; \imm \left(\de\colon \Leb{k-1}{\psi-2\psi}\dashrightarrow\Leb{k}{\phi-\psi}\right) \;.$$

\paragrafo{6}{Sobolev regularity of the solutions with compact support}

We prove that, for every $\alpha\in\Leb{k-1}{0}$ with compact support, if $\de\alpha\in\Leb{k}{0}$ and $\destar{0}{0}\alpha\in\Leb{k-2}{0}$, then $\alpha\in\Sob{k-1}{0}{1}$. Indeed, take $\left\{\Phi_\varepsilon\right\}_{\varepsilon\in\R}$ a family of positive mollifiers and, for every $\varepsilon\in\R$, consider $\alpha*\Phi_\varepsilon\in\Cinfc{k-1}$; by Remark \ref{rem:stima} with $\phi:=0$ and $\psi:=0$, we get that, for any multi-index $I$ such that $|I|=k-1$ and for any $\ell\in\In{n}$,
$$
\int_X\left|\frac{\del\left(\alpha_I*\Phi_\varepsilon\right)}{\del x^\ell}\right|^2\,\vol \;\leq\; C\cdot\left(\normazero{\destar{0}{0} \left(\alpha*\Phi_\varepsilon\right)}^2+\normazero{\de \left(\alpha*\Phi_\varepsilon\right)}^2\right) \;,
$$
where $C:=:C(k,n)$ is a constant depending just on $k$ and $n$; since, for every multi-index $I$ such that $|I|=k-1$, and for every $\ell\in\In{n}$, it holds that $\lim_{\varepsilon\to0}\int_X\left|\frac{\del\left(\alpha_I*\Phi_\varepsilon\right)}{\del x^\ell} - \frac{\del \alpha_I}{\del x^\ell}\right|^2\,\vol=\lim_{\varepsilon\to0}\normazero{\destar{0}{0} \left(\alpha*\Phi_\varepsilon\right) - \destar{0}{0} \alpha}=\lim_{\varepsilon\to0}\normazero{\de \left(\alpha*\Phi_\varepsilon\right) - \de \alpha}=0$, we get that
$$
\int_X\left|\frac{\del\alpha_I}{\del x^\ell}\right|^2\,\vol \;\leq\; C\cdot\left(\normazero{\destar{0}{0} \alpha}^2+\normazero{\de \alpha}^2\right) \;,
$$
proving the claim.

\paragrafo{7}{Regularization of the solution} By {\itshape Step 5}, if $\eta\in\Cinf{k}$ is such that $\de\eta=0$, then
the equation $\de\alpha=\eta$ has a solution $\alpha\in\Lebloc{k-1}$; we prove that actually $\alpha\in\Cinf{k-1}$.\\
Note that we may suppose that the solution $\alpha\in\Lebloc{k-1}$ satisfies
$$ \alpha \;\in\; \left(\ker\de\right)^{\perp_{\Lebloc{k-1}}} \;=\; \overline{\imm\destar{0}{0}} \;=\; \imm\destar{0}{0} \;\subseteq\; \ker\destar{0}{0} \;;$$
hence, $\alpha$ satisfies the system of differential equation
$$
\left\{
\begin{array}{rcl}
 \de \alpha &=& \eta \\[5pt]
 \destar{0}{0} \alpha &=& 0
\end{array}
\right. \;.
$$
We prove, by induction on $s\in\N$, that $\alpha\in\Sobloc{s}{k-1}$ for every $s\in\N$. Indeed, we have by {\em Step 5} that $\alpha\in\Sobloc{0}{k-1}=\Lebloc{k-1}$. Suppose now that $\alpha\in\Sobloc{s}{k-1}$ and prove that $\alpha\in\Sobloc{s+1}{k-1}$. Clearly, $\eta\in\Cinf{k}\subseteq\Sobloc{\sigma}{k}$ for every $\sigma\in\N$. Take $K$ a compact subset of $X$, and choose $\widehat\chi\in\Cinfc{0}$ such that $\supp\widehat\chi\supset K$. For any multi-index $L:=:\left(\ell_1,\ldots,\ell_n\right)\in\N^n$ such that $\ell_1+\cdots+\ell_n=s$, being
$$ \de\left(\widehat\chi\cdot\frac{\del^{s}\alpha}{\del^{\ell_1} x^1\cdots\del^{\ell_n}x^n}\right) \;=\; \de\widehat\chi\wedge\frac{\del^{s}\alpha}{\del^{\ell_1} x^1\cdots\del^{\ell_n}x^n}+\widehat\chi \cdot \frac{\del^{s}\eta}{\del^{\ell_1} x^1\cdots\del^{\ell_n}x^n} \;\in\; \LebK{k}{0} $$
and
$$ \destar{0}{0}\left(\widehat\chi \cdot \frac{\del^{s}\alpha}{\del^{\ell_1} x^1\cdots\del^{\ell_n}x^n}\right) \;=\;
- \ssum{\substack{|J|=k-1\\|I|=k}}\sum_{\ell=1}^{n}\sign{\ell J}{I}\frac{\del \widehat\chi}{\del x^\ell}\cdot \frac{\del^{s}\alpha_I}{\del^{\ell_1} x^1\cdots\del^{\ell_n}x^n} \de x^J \;\in\; \LebK{k-2}{0} $$
we get that $\widehat\chi\cdot\frac{\del^{s}\alpha}{\del^{\ell_1} x^1\cdots\del^{\ell_n}x^n}\in\SobK{k-1}{0}{1}$, that is, $\alpha\in\SobK{k-1}{0}{s+1}$. Hence, $\alpha\in\Sobloc{s+1}{k-1}$. Since $\Sobloc{\sigma}{k-1} \hookrightarrow \mathcal{C}^m\left(X;\wedge^{k-1}T^*X\right)$ for every $0\leq m< \sigma-\frac{n}{2}$, see \cite[Corollary 7.11]{gilbarg-trudinger}, we get that $\alpha\in\Cinf{k-1}$, concluding the proof of the theorem.
\end{proof}

\end{document}